\newtheorem{lemma}{Lemma}
\title{Intrinsic determination of the criticality of a slow-fast Hopf bifurcation}
\author[1]{Peter De Maesschalck}
\author[2]{Thai Son Doan}
\author[1]{Jeroen Wynen}
\affil[1]{Department of Mathematics, Hasselt University, Hasselt, Belgium}
\affil[2]{Institute of Mathematics, Vietnam Academy of Science and Technology, Hanoi, Vietnam}
\date{May 18, 2020}
\def\grad{\nabla}
\newenvironment{vf}{\left\{\begin{array}{rcl}}{\end{array}\right.}
\def\R{\mathbf{R}}
\newtheorem{theorem}{Theorem}
\newtheorem{proposition}{Proposition}
\theoremstyle{remark}
\newtheorem{remark}{Remark}
\begin{document}

\maketitle

\begin{abstract}
  The presence of slow-fast Hopf (or singular Hopf) points  in slow-fast systems in the plane is often deduced from the shape of a vector field brought into normal form.  It can however be quite cumbersome to put a system in normal form.  In \cite{Mono} an intrinsic presentation of slow-fast vector fields is initiated, showing hands-on formulas to check for the presence of such singular contact points.  We generalize the results in the sense that the criticality of the Hopf bifurcation can be checked with a single formula.  We demonstrate the result on a slow-fast system given in non-standard form where slow and fast variables are not separated from each other. The formula is  convenient since it does not require any parameterization of the critical curve.

  \noindent AMS MSC: 34C23, 37G10, 34C26.

  \noindent Keywords: singular bifurcations, slow-fast Hopf bifurcation, criticality.
\end{abstract}

\section{Introduction}\label{sec:intro}

Slow-fast systems in the plane are capable of undergoing singular bifurcations (or slow-fast bifurcations), like singular Hopf, singular Bogdanov-Takens  bifurcations or even bifurcations of higher codimension.  In comparison to regular bifurcations, these ones are actually $\epsilon$-families of bifurcations, where $\epsilon$ is the singular parameter used for time scale separation.  Typically, the bifurcation diagram shrinks to a point in the singular limit $\epsilon=0$.\medskip

In \cite{sfbt} the notion slow-fast codimension $k$ bifurcation was introduced, $k=1$ corresponding to singular Hopf and $k=2$ to singular Bogdanov-Takens  bifurcation.  The criterium stipulates that a system undergoes such a  bifurcation if it can be brought into a local normal form having a specific shape.  In that respect, it is an indirect way
of identifying the bifurcations.  If one combines this with the knowledge that putting systems into normal form can be computationally challenging, it is clear that there is a need for an intrinsic characterization.  We will first review some results derived in \cite{Mono}, where we intrinsically determine the presence of slow-fast codimension $k$ bifurcations, and then refine the results by providing a formula to see the criticality of a slow-fast Hopf bifurcation.  The formula can easily be implemented in a symbolic manipulator, and could  be useful in studying applications (for example \cite{app}, \cite{app2}, \dots). It can be applied to singular Bogdanov-Takens bifurcations where it determines the criticality of the Hopf bifurcation curve inside the BT bifurcation diagram (see \cite{wech-pdm}, \cite{sfbt}).
\medskip

\begin{remark}
The results extend the coordinate-free approach in \cite{WECH}, in that monograph the author did not derive a formula for the criticality, which is the main point here.
The context of the paper is limited to systems with two variables (eg.~systems on $\R^2$ or more generally on a smooth surface), though the results trivially extend to vector fields on invariant manifolds (like center manifolds) of higher dimensional systems.  (Results on the criticality of singular Hopf bifurcations in higher dimensions can be found in \cite{highdim}; that paper restricts the presentation to the standard form of slow-fast systems.) The results in this paper do not require high degrees of smoothness: smoothness class $C^5$ will for sure amply suffice. We finally remark that we have limited ourselves to computing first Lyapunov quantity of a singular Hopf; higher codimension cases are treated in \cite{birth} for systems in Li\'enard normal form.
 \end{remark}

In Section~\ref{sec:review} we give a short review of planar slow-fast systems, limited to the context that we need here.  In Section~\ref{sec:results} we state the new results, which we prove in Section~\ref{sec:proof}.  We then illustrate the formula on two cases in Section~\ref{sec:examples}.

\section{Review on Slow-fast systems}\label{sec:review}

This section is based entirely on the presentation of slow-fast systems in the monograph \cite{Mono}.
Let $X_{\epsilon,\lambda}$ be a smooth family of vector field on a smooth surface $M$, defined in a neighbourhood of some $p\in M$, and defined for $\epsilon\in [0,\epsilon_0)$ and $\lambda$ in some open neighborhood $\Lambda$ of $\lambda_0$.  The parameter set $\Lambda$ is assumed to be inside some Euclidean space $\R^n$.

The ``\emph{fast  vector field}'' $X_{0,\lambda}$ is supposed to have a  curve of singular points
\[
    S_\lambda = \{ q\in M : F_\lambda(q) = 0 \},
\]
called the ``\emph{critical curve}''. We will ({\bf assumption A1}) assume the curve to be smooth and regular in the sense that it has near $p$ a well-defined tangent (and we assume that $p\in S_{\lambda_0}$. We write
\[
    X_{\epsilon,\lambda} = F_\lambda.Z_\lambda + \epsilon Q_\lambda + O(\epsilon^2).
\]
Here, $F_\lambda$ is a scalar family of functions, $Z_\lambda$ and $Q_\lambda$ are families of vector fields; we may express the vector field on a local chart in $\R^2$ as
\[
    \begin{vf}
        \dot{x} &=& F_\lambda(x,y)Z^1_\lambda(x,y) + \epsilon Q^1_\lambda(x,y) + O(\epsilon^2),\\
        \dot{y} &=& F_\lambda(x,y)Z^2_\lambda(x,y) + \epsilon Q^2_\lambda(x,y) + O(\epsilon^2),
    \end{vf}
\]
but we will avoid using this explicit shape as much as possible.

The vector field $Q_\lambda$ is related but is not equal to the slow vector field: in fact, if we would denote with $\tilde{Q}_\lambda(q)$ the linear projection of $Q_\lambda(q)$ on the tangent line $T_q(S_\lambda)$ (projection along the direction of $Z_\lambda(q)$), then the vector field $q\mapsto \tilde{Q}_\lambda(q)$ would be a vector field along the curve $S_\lambda$, which is better known as the ``\emph{slow vector field}''.  We will not be needing this notion explicitly in this paper, so we refer to \cite{Mono} for relating this intrinsic definition to the more usual version that is based on first bringing the shape of the slow-fast system in standard form or that is based on the construction of center manifolds.\medskip

The principal slow-fast ingredients of the slow-fast family of vector fields can hence be derived from the triplet $(F_\lambda, Z_\lambda, (Q_\lambda)|_{S_\lambda})$.

Assume furthermore that $Z_\lambda(p)\not=0$ ({\bf assumption A2}), which allows to conclude the last part of the next lemma:

\begin{lemma}\cite{Mono}
    The fast-vector field is normally hyperbolic at a point $q$ on the critical curve (eg.~with $F(q)=0$) if and only if $Z_\lambda(F)(q)\not=0$.  It is normally attracting when $Z_\lambda(F)(q)<0$, normally repelling when $Z_\lambda(F)(q)>0$.  Points where $Z_\lambda(F)(q)=0$ are called contact points.  The linearization of the fast vector field at such point is always nilpotent.
\end{lemma}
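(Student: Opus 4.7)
The plan is to compute the linearization of the fast vector field $X_{0,\lambda}=F_\lambda\cdot Z_\lambda$ at a point $q\in S_\lambda$ and read off the spectrum directly. Since $F_\lambda(q)=0$, the Leibniz rule applied to the product $F_\lambda Z_\lambda$ yields
\[
    DX_{0,\lambda}|_q\colon v\ \longmapsto\ \bigl(dF_\lambda|_q(v)\bigr)\,Z_\lambda(q),
\]
because the $F_\lambda(q)\,DZ_\lambda|_q$ contribution vanishes. This is coordinate-free; in a local chart it amounts to computing the Jacobian of $(F_\lambda Z^1_\lambda, F_\lambda Z^2_\lambda)$ at a zero of $F_\lambda$ and noting that only the $\partial F_\lambda$ terms survive.

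Next I would extract the eigenvalues. By assumption~A1 the curve $S_\lambda$ is a smooth regular level set of $F_\lambda$, so $dF_\lambda|_q\neq 0$ and $T_qS_\lambda=\ker dF_\lambda|_q$; any tangent vector to $S_\lambda$ therefore lies in the kernel of $DX_{0,\lambda}|_q$ and contributes one eigenvalue equal to $0$. Because the map above has rank at most one, its second eigenvalue equals the trace, and
\[
    \operatorname{tr}\bigl(v\mapsto dF_\lambda|_q(v)\,Z_\lambda(q)\bigr)=dF_\lambda|_q\bigl(Z_\lambda(q)\bigr)=Z_\lambda(F_\lambda)(q),
\]
i.e.\ the Lie derivative of $F_\lambda$ along $Z_\lambda$ at $q$.

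With the spectrum $\{0,\ Z_\lambda(F_\lambda)(q)\}$ in hand, the first three assertions are immediate: normal hyperbolicity along $S_\lambda$ is the nonvanishing of the transverse eigenvalue, which reads $Z_\lambda(F_\lambda)(q)\neq 0$, and since this eigenvalue is real its sign decides attraction versus repulsion. At a contact point both eigenvalues vanish; however, assumption~A2 (together with $dF_\lambda|_q\neq 0$ from~A1) guarantees that the rank-one operator displayed above is itself nonzero. A nonzero $2\times 2$ matrix with both eigenvalues equal to zero is nilpotent of index two, proving the last claim.

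I do not foresee a substantive obstacle: the argument rests entirely on the product rule at a zero of $F_\lambda$, and the only mildly delicate point is the verification that both factors producing the rank-one operator are nonzero, so that nilpotency (rather than triviality) of the linearization follows at contact points.
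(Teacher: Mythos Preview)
Your argument is correct. The paper does not actually supply a proof of this lemma---it is quoted from the monograph \cite{Mono}---so there is nothing to compare against; your computation of the linearization via the product rule at a zero of $F_\lambda$, yielding the rank-one map $v\mapsto dF_\lambda|_q(v)\,Z_\lambda(q)$ with spectrum $\{0,\,Z_\lambda(F_\lambda)(q)\}$, is the standard route and exactly what one would expect the cited source to contain.

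One minor remark: assumption~A2 in the paper is phrased only at the distinguished point $p$, not at an arbitrary $q\in S_\lambda$, so invoking it for a general contact point is a slight overreach in notation (though harmless, since the lemma is applied at $p$). More to the point, a $2\times 2$ matrix with both eigenvalues zero is automatically nilpotent whether or not it vanishes, so the nilpotency assertion as literally stated does not require A2 at all; A2 (together with $dF_\lambda|_q\neq 0$) is what upgrades the conclusion to ``nilpotent of index two'', which is presumably what the authors intend when they say A2 ``allows to conclude the last part''.
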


In the above lemma and at other places in the paper we have identified vector fields $V$ with their Lie derivative $\mathcal{L}_V$ and conveniently write $V(f) = \mathcal{L}_V(f)$.  In a chart, it would mean that $V(f) = V_x\frac{\partial f}{\partial x} + V_y\frac{\partial f}{\partial y}$.\medskip

Under these conditions it is always possible (see \cite{Mono}) to find a local chart of $M$ where the slow-fast vector field is given, up to multiplication by some positive function $w(x,y,\epsilon,\lambda)$ by
\begin{equation}\label{eq:nf}
    \begin{vf}
        \dot{x} &=& y-f_\lambda(x)  \\
        \dot{y} &=& \epsilon( g_\lambda(x) + (y-f_\lambda(x))h_\lambda(x,y) + O(\epsilon))
    \end{vf}
\end{equation}
(We would then have $(F_\lambda,Z_\lambda,Q_\lambda|_{S_\lambda}) = (y-f_\lambda(x),w.\frac{\partial}{\partial x}, w.g_\lambda\frac{\partial}{\partial y}$.)

Our results will deal with a point $p$ that is ({\bf assumption A3}) a contact point for $\lambda=\lambda_0$, eg.~$F_{\lambda_0}(p)=0$ and $Z_{\lambda_0}(F_{\lambda_0})(p)=0$.  We will furthermore assume that the contact point is a generic one in the sense that
\[
    (Z_{\lambda_0})^2(F_{\lambda_0})(p) := Z_{\lambda_0}(Z_{\lambda_0}(F_{\lambda_0}))(p) \not=0.
\]
In the local expression (\ref{eq:nf}) it means that $f_{\lambda_0}'(x_0)=0$ and $f_{\lambda_0}''(x_0)\not=0$.  The genericity implies

\begin{proposition}\label{prop:genericity}
    Assume $X=XZ+\epsilon Q$ satisfy assumptions 1--2.  Let $p$ satisfy assumption 3.  There exists a neighborhood $\Lambda\subset\R^n$ of $\lambda_0$ and a map $p:\Lambda\to M:\lambda\mapsto p_\lambda$ such that $p_{\lambda_0}=p$ and such that $p_\lambda$ is a generic contact point for all $\lambda$:
    \[
        F_\lambda(p_\lambda) = 0,\qquad Z_\lambda(F_\lambda)(p_\lambda) = 0,\qquad Z^2_\lambda(F_\lambda)(p_\lambda)\not=0.
    \]
\end{proposition}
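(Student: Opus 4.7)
The plan is to apply the implicit function theorem to the pair of equations
\[
    F_\lambda(q) = 0, \qquad Z_\lambda(F_\lambda)(q) = 0
\]
in the two-dimensional unknown $q\in M$, parametrized by $\lambda\in\R^n$. Working in a local chart around $p$, these become two scalar equations in two unknowns. By assumption A3, both hold at $(p,\lambda_0)$, so what remains is to verify that the differentials $dF_{\lambda_0}(p)$ and $d\bigl(Z_{\lambda_0}(F_{\lambda_0})\bigr)(p)$ are linearly independent at $p$; the IFT will then deliver a unique smooth $\lambda\mapsto p_\lambda$ satisfying the first two conditions.

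The linear-independence step is where all the geometry happens, and I would argue it as follows. The contact-point condition $Z_{\lambda_0}(F_{\lambda_0})(p)=0$ is precisely the statement that $Z_{\lambda_0}(p)\in\ker dF_{\lambda_0}(p)=T_p S_{\lambda_0}$, so $Z_{\lambda_0}(p)$ is a tangent vector to the critical curve at $p$, and by A2 it is nonzero. Suppose for contradiction that the two differentials were proportional, say $d\bigl(Z_{\lambda_0}(F_{\lambda_0})\bigr)(p)=c\,dF_{\lambda_0}(p)$ for some $c\in\R$. Evaluating on $Z_{\lambda_0}(p)$ yields
\[
    Z_{\lambda_0}^2(F_{\lambda_0})(p) \;=\; c\,Z_{\lambda_0}(F_{\lambda_0})(p) \;=\; 0,
\]
which contradicts A3. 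Hence the differentials are independent, and the IFT produces a neighborhood $\Lambda$ of $\lambda_0$ and a smooth map $\lambda\mapsto p_\lambda$ with $p_{\lambda_0}=p$ and $F_\lambda(p_\lambda)=Z_\lambda(F_\lambda)(p_\lambda)=0$.

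For the third (open) condition $Z_\lambda^2(F_\lambda)(p_\lambda)\neq 0$, I would invoke continuity: the quantity $\lambda\mapsto Z_\lambda^2(F_\lambda)(p_\lambda)$ is smooth in $\lambda$ by composition, and nonzero at $\lambda=\lambda_0$ by A3, hence nonzero on a possibly smaller neighborhood, which we still call $\Lambda$ after shrinking.

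The only real subtlety is the linear-independence check, and that step is essentially built into the formulation of A3: the condition $Z_{\lambda_0}^2(F_{\lambda_0})(p)\neq 0$ is exactly the non-degeneracy of $Z_{\lambda_0}(F_{\lambda_0})$ in the tangent direction to $S_{\lambda_0}$ at $p$, and this is precisely what forces the two gradients apart. Everything else is a routine application of the IFT, together with a shrinking of $\Lambda$ to preserve the open inequality.
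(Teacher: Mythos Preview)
Your proposal is correct and mirrors the paper's own argument almost exactly: the paper shows that $(q\mapsto (F(q),Z(F)(q)))$ is a regular chart by arguing that if $\nabla ZF=c\,\nabla F$ at $p$ then $Z^2F(p)=c\,Z(F)(p)=0$, contradicting A3, and then invokes the implicit function theorem on the $\lambda$-dependent map. Your evaluation of the proportionality relation on $Z_{\lambda_0}(p)$ and subsequent continuity argument for the open condition are the same steps, so there is nothing to add.
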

Though the statement is actually proven in \cite{Mono} we will give a short proof here as well, see Remark~\ref{rm:proofpropgen}.\medskip

Different kinds of contact points are distinguished next.  Most commonly, contact points are so-called jump points.  But it can also be a slow-fast Hopf point, a slow-fast BT (Bogdanov-Takens) point or of higher codimension.  The two most common situations can be characterized using Lie derivatives and Lie Brackets.  Recall that a Lie bracket of a vector field, identified with its Lie derivative, is given by
\[
    [V,W]: f\mapsto V(W(f))-W(V(f)).
\]

\begin{lemma}\cite{Mono}
    The point $p_\lambda$ is a \emph{jump point} if and only if $Q_{\lambda}(F_{\lambda})(p)\not=0$.   In the other case it is called a \emph{singular contact point}.  When
    \[
        Q_{\lambda}(F_{\lambda})(p)=0\qquad\text{and}\qquad
        [Z_{\lambda},Q_{\lambda}](F_{\lambda})(p_\lambda)<0
    \]
    the point $p$ is a slow-fast Hopf point.  (When
    $[Z_{\lambda},Q_{\lambda}](F_{\lambda})(p_\lambda)>0
    $
    it is called a singular contact point of index -1, it is a singular saddle.)
\end{lemma}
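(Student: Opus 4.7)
The plan is to reduce the proof to an explicit computation in the Li\'enard-type local chart \eqref{eq:nf}. After multiplying the vector field by the positive function $w$ we have, in that chart,
$$
F_\lambda(x,y) = y - f_\lambda(x), \qquad Z_\lambda = w\,\partial_x, \qquad Q_\lambda = w\bigl(g_\lambda(x) + (y-f_\lambda(x))h_\lambda(x,y)\bigr)\,\partial_y,
$$
with $p_\lambda = (x_0(\lambda),f_\lambda(x_0(\lambda)))$ satisfying $f'_\lambda(x_0)=0$, $f''_\lambda(x_0)\neq 0$. The whole argument is then a matter of evaluating the two scalars $Q_\lambda(F_\lambda)(p_\lambda)$ and $[Z_\lambda,Q_\lambda](F_\lambda)(p_\lambda)$ in this expression and matching them to the classical fold/Hopf/saddle dichotomy for \eqref{eq:nf}.

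First I would compute $Q_\lambda(F_\lambda)(p_\lambda) = w(p_\lambda)\,g_\lambda(x_0)$, and observe that in the Li\'enard normal form a contact point is by definition a jump point precisely when $g_\lambda(x_0)\neq 0$ (the slow flow does not vanish at the fold and pushes orbits across it). This already yields the first equivalence.

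In the singular case $g_\lambda(x_0)=0$ I would next evaluate the Lie bracket. Since $Q_\lambda$ acts in the $\partial_y$-direction while $Z_\lambda(F_\lambda) = -w f'_\lambda$ depends only on $x$, one immediately gets $Q_\lambda(Z_\lambda(F_\lambda))(p_\lambda)=0$ because of the factor $f'_\lambda(x_0)=0$; a routine computation then reduces $Z_\lambda(Q_\lambda(F_\lambda))$ at $p_\lambda$ to $w(p_\lambda)^2\,g'_\lambda(x_0)$ (all remaining terms carry a factor of $g_\lambda(x_0)$, $y-f_\lambda$, or $f'_\lambda$, and hence vanish). Consequently
$$
[Z_\lambda,Q_\lambda](F_\lambda)(p_\lambda) = w(p_\lambda)^2\,g'_\lambda(x_0).
$$
The standard linearization at the contact point of $\dot x = y-f_\lambda(x),\ \dot y = \epsilon g_\lambda(x)$ has eigenvalues $\pm\sqrt{\epsilon g'_\lambda(x_0)}$: purely imaginary (slow-fast Hopf) when $g'_\lambda(x_0)<0$ and real of opposite signs (singular saddle) when $g'_\lambda(x_0)>0$. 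Together with the positivity of $w^2$ this gives the stated classification.

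The only point that needs care is that the normal form \eqref{eq:nf} is determined only up to multiplication of $X_{\epsilon,\lambda}$ by a positive function $w$, inducing the time rescaling $Z\mapsto wZ$, $Q\mapsto wQ$. A short direct computation shows that $Q(F)(p_\lambda)$ scales by $w(p_\lambda)>0$, and that at a singular contact point, where $F=Z(F)=Q(F)=0$, one has $[wZ,wQ](F)(p_\lambda) = w(p_\lambda)^2\,[Z,Q](F)(p_\lambda)$; in particular the \emph{signs} of both test quantities are intrinsic, so the computation carried out in the normal form really characterizes the type of the contact point. Beyond this invariance check, the proof is essentially mechanical and presents no substantial obstacle.
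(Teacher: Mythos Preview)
The paper does not actually prove this lemma; it is quoted from \cite{Mono}, and the only justification offered is the one-line remark immediately following the statement, that in the local form \eqref{eq:nf} jump points correspond to $g_\lambda(x_\lambda)\neq 0$ and slow-fast Hopf points to $g_\lambda(x_\lambda)=0$, $g_\lambda'(x_\lambda)<0$. Your proposal is precisely an expansion of that verification and is correct.

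Two small comments. First, your claim that $Z_\lambda(F_\lambda)=-w f_\lambda'$ ``depends only on $x$'' is not quite right, since $w$ may depend on $y$ as well; but your actual argument only needs the factor $f_\lambda'(x_0)=0$, which you state, so the conclusion $Q_\lambda(Z_\lambda(F_\lambda))(p_\lambda)=0$ stands. Second, the invariance issue is slightly broader than the time-rescaling $Z\mapsto wZ$, $Q\mapsto wQ$ you treat: as in Remark~\ref{rm:triplet} one should also check invariance of the signs under $Q\mapsto Q+cZ$ and $(F,Z)\mapsto(bF,b^{-1}Z)$. Both are immediate at a contact point (using $F(p_\lambda)=Z_\lambda(F_\lambda)(p_\lambda)=0$, and additionally $Q_\lambda(F_\lambda)(p_\lambda)=0$ for the bracket), and the paper itself simply defers this to \cite{Mono}.
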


In the local expression (\ref{eq:nf}) it can easily be checked that jump points $p_\lambda=(x_\lambda,y_\lambda)$ correspond to $g_{\lambda}(x_\lambda)\not=0$ and singular Hopf points to the case $g_{\lambda}(x_\lambda)=0$ and $g_{\lambda}'(x_\lambda)<0$.

\begin{remark}\label{rm:triplet}
The triple $(F_\lambda,Z_\lambda,Q_\lambda)$ is not entirely defined intrinsically; in fact application of an $\epsilon$-dependent  change of coordinates  could lead to changing the triplet into an ``equivalent'' $(F_\lambda,Z_\lambda,Q_\lambda + cZ_\lambda)$  for some scalar function $c$, and moreover the factorization of $X_0$ into $F_\lambda.Z_\lambda$ is defined up to a nonzero scalar scalar function $b$: $(b.F_\lambda,b^{-1}.Z_\lambda,Q_\lambda)$ is an equivalent triplet.  Choosing another factorization does not affect the conditions of the lemma, see~\cite{Mono}.
\end{remark}

In \cite{Mono}, an equivalent criterium for the presence of a slow-fast Hopf point can be found, based on the choice of an area form $\Omega$ on the manifold $M$; that led to a function that was invariant up to some multiplicative factor. We will improve this result to find an actual intrinsic function, independent on the choice of the area form and independent w.r.t.~factorization equivalence mentioned in Remark~\ref{rm:triplet}.

\section{Statement of the results}\label{sec:results}

Recall that under assumption A1--A3, the contact point $p$ for $\lambda_0$ perturbs smoothly to contact points $p_\lambda$ for nearby parameter values $\lambda$.  In the next theorem we define two values that will reveal important to characterize the criticality.

\begin{theorem}\label{thm:intrinsic}
    Let $(M,g)$ be a Riemannian manifold, and let $\Omega$ be the area form associated to the metric $g$, and let also be $\nabla$ the gradient associated to that metric.  Under assumptions A1--A3 (for which it actually suffices to assume $A3$ for $\lambda=\lambda_0$, seen Proposition~\ref{prop:genericity}),
    the function
    \[
        \mathcal{G}_{\lambda} = \Omega(Q_{\lambda}, Z_{\lambda} ) . \Omega (\grad F_{\lambda } ,\grad Z_{\lambda}F_{\lambda} )
    \]
    on $S_{\lambda}$ and the value
    \[
        \left.\mathcal{A}_{\lambda} = \frac{    Z_{\lambda}^3(F_{\lambda})}{ \left(Z_{\lambda}^2(F_{\lambda})\right)^2}\right|_{p_\lambda}
    \]
    are well-defined, are coordinate free notions, independent w.r.t.~factorization equivalence mentioned in Remark~\ref{rm:triplet}.
    Furthermore, $\mathcal{G}_{\lambda_0}$ does not depend  on the choice of the metric.
\end{theorem}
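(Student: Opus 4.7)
The plan is to dispatch the claims separately: coordinate-freeness, the two factorization-invariance statements from Remark~\ref{rm:triplet}, and finally metric-independence of $\mathcal{G}_{\lambda_0}$. Coordinate-freeness is immediate since every building block---Lie derivatives along $Z_\lambda$, the scalar $F_\lambda$, the gradient $\nabla$ of a function with respect to a fixed metric, and the evaluation of the area form $\Omega$ on pairs of tangent vectors---is an intrinsic object, so any algebraic combination such as $\mathcal{G}_\lambda$ or $\mathcal{A}_\lambda$ produces the same number when read in any chart. Well-definedness of $\mathcal{A}_\lambda$ uses $Z_\lambda^2(F_\lambda)(p_\lambda)\neq 0$ from assumption A3.

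For $\mathcal{A}_\lambda$, the equivalence $Q_\lambda\mapsto Q_\lambda+cZ_\lambda$ is trivial since it touches neither $F_\lambda$ nor $Z_\lambda$. Under $(F_\lambda,Z_\lambda)\mapsto(bF_\lambda,b^{-1}Z_\lambda)$ I would iterate the Leibniz rule, starting from $\tilde Z(\tilde F)=b^{-1}\bigl(Z(b)F+bZ(F)\bigr)$, and systematically discard any contribution carrying a factor of $F$ or $Z(F)$ since these vanish at $p_\lambda$. The expected outcome is
\[
\tilde Z^2(\tilde F)(p_\lambda)=b^{-1}(p_\lambda)\,Z^2(F)(p_\lambda),\qquad \tilde Z^3(\tilde F)(p_\lambda)=b^{-2}(p_\lambda)\,Z^3(F)(p_\lambda),
\]
so the ratio $Z^3_\lambda(F_\lambda)/(Z^2_\lambda(F_\lambda))^2$ at $p_\lambda$ is preserved.

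For $\mathcal{G}_\lambda$ the $Q\mapsto Q+cZ$ equivalence uses only antisymmetry: $\Omega(Q+cZ,Z)=\Omega(Q,Z)$. Under $(bF,b^{-1}Z)$, the key observation is that evaluation takes place on $S_\lambda$ where $F=0$, so $\nabla(bF)=b\,\nabla F$ there; similarly $\tilde Z(\tilde F)=Z(F)+b^{-1}Z(b)\,F$ implies $\nabla(\tilde Z\tilde F)=\nabla Z(F)+b^{-1}Z(b)\,\nabla F$ on $S_\lambda$. The extra summand is a multiple of $\nabla F$, so $\Omega(b\,\nabla F,\,\cdot\,)$ kills it and only $\Omega(b\,\nabla F,\nabla Z(F))=b\,\Omega(\nabla F,\nabla Z F)$ survives, exactly balancing the $b^{-1}$ produced by $\Omega(Q,b^{-1}Z)$.

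Finally, metric-independence of $\mathcal{G}_{\lambda_0}$ I would prove by a direct coordinate computation. With $\Omega=\sqrt{\det g}\,dx\wedge dy$ and $\nabla f=g^{ij}\partial_j f\,\partial_i$, one checks
\[
\Omega(Q,Z)=\sqrt{\det g}\,(Q^xZ^y-Q^yZ^x),\qquad \Omega(\nabla F,\nabla Z F)=\frac{1}{\sqrt{\det g}}\bigl(\partial_x F\,\partial_y(ZF)-\partial_y F\,\partial_x(ZF)\bigr),
\]
so the $\sqrt{\det g}$ factors cancel in the product and the answer contains no metric data at all. The main obstacle I anticipate is bookkeeping in the $(bF,b^{-1}Z)$ check for $\mathcal{A}_\lambda$: a naive expansion of three nested Leibniz rules is messy, but using $\tilde Z=b^{-1}Z$ as a named shorthand and systematically killing terms using $F(p_\lambda)=Z(F)(p_\lambda)=0$ at each step should keep the computation to a few lines; everything else is a one- or two-line verification once the right antisymmetry argument or coordinate identity is in hand.
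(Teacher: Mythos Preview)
Your proposal is correct and follows essentially the same route as the paper: the paper also splits the proof into factorization-invariance of $\mathcal{A}$ (via iterated Leibniz with systematic elimination of $F$- and $Z(F)$-multiples, yielding precisely the scaling factors $b^{-1}$ and $b^{-2}$ you predict, including a cancellation at third order), factorization-invariance of $\mathcal{G}$ (via antisymmetry of $\Omega$ and the observation that on $S_\lambda$ the extra terms in $\nabla(\tilde Z\tilde F)$ are multiples of $\nabla F$), and metric-independence via the $\sqrt{\det g}$ cancellation between $\Omega(Q,Z)$ and $\Omega(\nabla F,\nabla ZF)$. The only cosmetic difference is that the paper writes out the first fundamental form $(E,F,G)$ explicitly where you use the index shorthand $g^{ij}$.
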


\begin{remark}
    \begin{enumerate}
    \item
    The result in Theorem~\ref{thm:intrinsic} is strong since it allows to compute $\mathcal{G}$ and $\mathcal{A}$ in any coordinate system.  We refer to the example section for details.  In fact, by allowing arbitrary choices of the chart and metric, it is by far the most easy choice to pick the coordinate system in which the system is given with the standard Euclidean metric.
     \item
        The invariant $\mathcal{A}$ could be related to the ``skewness'' of the critical curve: it is determined solely by the fast vector field $X_0$.
        The invariant $\mathcal{G}$ is closely related to the slow vector field, and from the function we will derive a so-called ``skewness'' of the slow vector field.  The two notions together determine the criticality of a Hopf bifurcation as we will see.
     \end{enumerate}
\end{remark}

In order to give a formula for the criticality, we will make use of the intrinsically defined vector field $V_\lambda$ for which
\[
    V_\lambda(F_\lambda)=0,\qquad V_\lambda(Z_\lambda(F_\lambda))=1,
\]
along the critical set.  It is uniquely defined up to an ideal generated by $F$.

\begin{theorem}\label{thm:criticality}
    Assume A1--A3.   The contact point $p=p_{\lambda_0}$ is a slow-fast Hopf point if and only if
    \[
        Z_{\lambda_0}(F_{\lambda_0})(p ) = 0, \mathcal{G}_{\lambda_0}(p )= 0, V_{\lambda_0}(\mathcal{G}_{\lambda_0})(p )<0.
    \]
    If
    \[
        \frac{\partial}{\partial\lambda}\left.\mathcal{G}_{\lambda}(p_\lambda)\right|_{\lambda=\lambda_0}\not= 0
    \]
    (i.e.~the derivative in the left hand side is a linear map $\R^n\to \R$ of rank 1).  Then the system undergoes a supercritical slow-fast Hopf bifurcation when
    \[
        \sigma := \frac12V_{\lambda_0}^2(\mathcal{G}_{\lambda_0})(p) -V_{\lambda_0}(\mathcal{G}_{\lambda_0})(p)\mathcal{A}_{\lambda_0}
    \]
    is negative (meaning that the first Lyapunov coefficient is also negative and the limit cycle is stable when it appears).  When $\sigma>0$, it is a subcritical Hopf bifurcation.  When $\sigma=0$, the Hopf bifurcation is degenerate (sometimes also called a Bautin bifurcation).
\end{theorem}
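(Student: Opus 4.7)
My strategy is to exploit the coordinate-invariance established in Theorem~\ref{thm:intrinsic}: since $\mathcal{G}_\lambda$, $\mathcal{A}_\lambda$ and the vector field $V_\lambda$ are intrinsic (independent of the chart, of the Riemannian structure at the contact point, and of the factorization of $X_0$), I can compute them in any chart with any compatible metric. The most convenient choice is the Li\'enard normal form (\ref{eq:nf}) with the Euclidean metric; this reduces the theorem to explicit one-dimensional computations together with an identification of $\sigma$ with the classical first Lyapunov coefficient of the Hopf bifurcation arising in (\ref{eq:nf}) for small $\epsilon>0$.

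\emph{Invariants in the normal form.} With $F=y-f_\lambda(x)$, $Z=\partial_x$ and $Q = (g_\lambda(x)+(y-f_\lambda(x))h_\lambda+O(\epsilon))\partial_y$, direct evaluation of the two $2\times 2$ determinants in $\mathcal{G}$ yields $\mathcal{G}_\lambda = -g_\lambda(x)\,f''_\lambda(x)$ along $S_\lambda$. The two defining conditions $V(F)=0$, $V(Z(F))=1$ pin $V_\lambda$ down modulo $(F)$ to
\[
V_\lambda = -\frac{1}{f''_\lambda(x)}\partial_x - \frac{f'_\lambda(x)}{f''_\lambda(x)}\partial_y.
\]
Using $f'_{\lambda_0}(x_0)=0$, I then find $V_{\lambda_0}(\mathcal{G}_{\lambda_0})(p)=g'_{\lambda_0}(x_0)$ and, when in addition $g_{\lambda_0}(x_0)=0$,
\[
V_{\lambda_0}^2(\mathcal{G}_{\lambda_0})(p) = -\frac{g''_{\lambda_0}(x_0)}{f''_{\lambda_0}(x_0)} - \frac{g'_{\lambda_0}(x_0)\,f'''_{\lambda_0}(x_0)}{(f''_{\lambda_0}(x_0))^2},\qquad \mathcal{A}_{\lambda_0} = -\frac{f'''_{\lambda_0}(x_0)}{(f''_{\lambda_0}(x_0))^2},
\]
so that the definition of $\sigma$ collapses to
\[
\sigma \;=\; \frac{g'_{\lambda_0}(x_0)\,f'''_{\lambda_0}(x_0)\;-\;g''_{\lambda_0}(x_0)\,f''_{\lambda_0}(x_0)}{2\,(f''_{\lambda_0}(x_0))^2}.
\]
The Hopf-characterization half of the theorem now follows directly: $\mathcal{G}_{\lambda_0}(p)=0$ iff $g_{\lambda_0}(x_0)=0$, and $V_{\lambda_0}(\mathcal{G}_{\lambda_0})(p)<0$ iff $g'_{\lambda_0}(x_0)<0$, which is precisely the singular-Hopf criterion stated just after the jump-point lemma.

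\emph{Criticality, transversality and the main obstacle.} It remains to match the sign of $\sigma$ with that of the first Lyapunov coefficient $L_1(\epsilon)$ of the planar $\lambda$-family (\ref{eq:nf}) for small $\epsilon>0$. My plan is to apply the standard slow-fast Hopf blow-up $(x-x_0,\,y-f_{\lambda_0}(x_0)) = (\sqrt{\epsilon}\,\tilde x,\;\epsilon\,\tilde y)$, $\tau=\sqrt{\epsilon}\,t$; the leading-order system is then reversible under $(\tilde x,\tilde y,\tau)\mapsto(-\tilde x,\tilde y,-\tau)$ and hence possesses a one-parameter family of closed orbits, while the reversibility-breaking $\sqrt{\epsilon}$-corrections are controlled precisely by $f'''_{\lambda_0}(x_0)$ and $g''_{\lambda_0}(x_0)$. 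A Melnikov/divergence integral against these symmetric leading orbits yields $L_1$ proportional to $g'_{\lambda_0}(x_0)f'''_{\lambda_0}(x_0) - g''_{\lambda_0}(x_0)f''_{\lambda_0}(x_0)$ with an explicit positive prefactor, matching the numerator of $\sigma$; the $(y-f_\lambda)h_\lambda$ and $O(\epsilon)$ terms vanish on $S_\lambda$ and so enter at strictly higher order in $\sqrt{\epsilon}$. The transversality condition $\partial_\lambda \mathcal{G}_\lambda(p_\lambda)|_{\lambda_0}\neq 0$ is equivalent, through $\mathcal{G}_\lambda = -g_\lambda f''_\lambda$ and $f''_{\lambda_0}(x_0)\neq 0$, to nonvanishing of $\partial_\lambda(g_\lambda(x_\lambda))|_{\lambda_0}$, i.e.~the classical Andronov--Hopf transversality, which combined with the sign of $L_1$ gives the super/sub/Bautin trichotomy. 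The genuine difficulty is precisely this sign-matching step: the reversible leading system is not Hamiltonian in the naive sense, so one must set up the Melnikov/divergence integral carefully and verify that the $h_\lambda$- and $O(\epsilon)$-contributions are genuinely subleading. As a shortcut one may instead invoke the Li\'enard-case Lyapunov-coefficient formulas already derived in \cite{Mono} and \cite{sfbt} and simply check that their expression agrees, up to positive factor, with the $\sigma$ displayed above.
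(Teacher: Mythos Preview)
Your reduction step is correct and essentially parallel to the paper's: you compute $\mathcal{G}$, $\mathcal{A}$ and $V$ in a Li\'enard chart and obtain the formula
\[
\sigma=\frac{g'_{\lambda_0}(x_0)f'''_{\lambda_0}(x_0)-g''_{\lambda_0}(x_0)f''_{\lambda_0}(x_0)}{2(f''_{\lambda_0}(x_0))^2},
\]
which is precisely what the paper's normal-form computation (Proposition~\ref{prop:intrinsic}) produces, written in slightly different variables (the paper normalises $f$ to $x^2/2$ and shifts the skewness into the multiplicative factor $\phi$, whereas you keep a general $f$ and set $\phi\equiv 1$; both are legitimate and yield the same intrinsic $\sigma$). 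Your identification of the Hopf conditions and of the transversality hypothesis with $\partial_\lambda(g_\lambda(x_\lambda))|_{\lambda_0}\neq 0$ is also correct.

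Where you diverge from the paper is the criticality step, and there your argument remains a plan rather than a proof. You propose a blow-up followed by a Melnikov/divergence computation against the reversible leading family, and you flag the sign-matching and the subleading nature of the $h_\lambda$ and $O(\epsilon)$ terms as the genuine obstacle --- correctly so, but you do not actually carry this out, and the fallback ``invoke the formulas in \cite{Mono}, \cite{sfbt}'' is not a self-contained argument. The paper instead uses a much shorter and more elementary trick that avoids Melnikov integrals entirely: working in the normal form with breaking parameter $a$ replacing $g_0$, it expands the \emph{formal slow curve} to order $\epsilon^2$, chooses $a=\epsilon a_0+O(\epsilon^2)$ with $a_0=g_1g_2-hg_1$ so as to kill the pole at $x=0$ (this fixes the canard value), and then evaluates the trace of the Jacobian at the singular point along this canard curve. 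The $h$-contribution cancels and one is left with $T=g_2=-\sigma$; since the sign of the trace at the canard moment decides the stability of the focus relative to the surrounding cycle, $\sigma<0$ gives the supercritical case. This bypasses the blow-up/Melnikov machinery altogether. Your route would work in principle, but as written it has a gap at exactly the point you yourself identify; the paper's trace-along-the-canard argument closes it in a few lines.
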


\begin{theorem}\label{thm:bt}
    Let assumptions A1--A3 be satisfied.   The contact point $p=p_{\lambda_0}$ is a slow-fast Bogdanov-Takens point if and only if
    \[
        Z_{\lambda_0}(F_{\lambda_0})(p ) = 0, \mathcal{G}_{\lambda_0}(p )= 0, V_{\lambda_0}(\mathcal{G}_{\lambda_0})(p )=0,   V_{\lambda_0}^2(\mathcal{G}_{\lambda_0})(p )\not=0.
    \]
        If
    \[
        \frac{\partial}{\partial\lambda}\left.\left(\mathcal{G}_{\lambda}(p_\lambda),V_{\lambda}(\mathcal{G}_{\lambda})(p_\lambda)\right)\right|_{\lambda=\lambda_0}
    \]
    is a linear map $\R^n\to\R$ of rank $2$, then the system undergoes a slow-fast Bogdanov-Takens bifurcation.  The sign of $V_{\lambda_0}^2(\mathcal{G}_{\lambda_0})(p)$ determines the criticality of the Hopf curve in this bifurcation diagram.
\end{theorem}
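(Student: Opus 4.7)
The plan is to reduce to the Li\'enard-type normal form (\ref{eq:nf}) -- legitimate by the coordinate-free and factorization-independent nature of $\mathcal{G}$, $\mathcal{A}$ and $V$ established in Theorem~\ref{thm:intrinsic} and Remark~\ref{rm:triplet} -- and then to verify both the characterization and the criticality claims by short computations in those coordinates with the standard Euclidean metric (permitted by the final clause of Theorem~\ref{thm:intrinsic}). With $F_\lambda = y - f_\lambda(x)$, $Z_\lambda = \partial_x$ and $Q_\lambda|_{S_\lambda} = g_\lambda(x)\partial_y$, a direct computation gives $\mathcal{G}_\lambda = -g_\lambda(x)f''_\lambda(x)$ and, along $S_\lambda$,
\[
    V_\lambda = -\frac{1}{f''_\lambda(x)}\frac{\partial}{\partial x} - \frac{f'_\lambda(x)}{f''_\lambda(x)}\frac{\partial}{\partial y}.
\]
Evaluating at $p=(x_0,f_{\lambda_0}(x_0))$, first using $g_{\lambda_0}(x_0)=0$ and then also $g'_{\lambda_0}(x_0)=0$, one finds
\[
    V_{\lambda_0}(\mathcal{G}_{\lambda_0})(p) = g'_{\lambda_0}(x_0), \qquad V^2_{\lambda_0}(\mathcal{G}_{\lambda_0})(p) = -\frac{g''_{\lambda_0}(x_0)}{f''_{\lambda_0}(x_0)},
\]
so that the four conditions of the theorem are equivalent to $g_{\lambda_0}(x_0) = g'_{\lambda_0}(x_0) = 0$ together with $g''_{\lambda_0}(x_0)\ne 0$, the classical slow-fast BT criterion of \cite{sfbt}.

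Next, for the transversality claim, the linearization at $\lambda_0$ of the map $\lambda\mapsto(\mathcal{G}_\lambda(p_\lambda),V_\lambda(\mathcal{G}_\lambda)(p_\lambda))$ equals that of $\lambda\mapsto(g_\lambda(x_\lambda),g'_\lambda(x_\lambda))$ composed with the invertible diagonal matrix $\operatorname{diag}(-f''_{\lambda_0}(x_0),\,1)$: the extra term $g f'''/f''$ that appears in the closed form of $V(\mathcal{G})$ is proportional to $g$ and contributes nothing at $\lambda_0$. Rank $2$ of our map thus coincides with the standard BT unfolding transversality used in \cite{sfbt}, which produces the slow-fast BT bifurcation. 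By the implicit function theorem the Hopf curve $\{\lambda:\mathcal{G}_\lambda(p_\lambda)=0\}$ is a smooth arc through $\lambda_0$ along which $V_\lambda(\mathcal{G}_\lambda)(p_\lambda)\to 0$, so the criticality quantity $\sigma_\lambda = \tfrac12 V^2_\lambda(\mathcal{G}_\lambda)(p_\lambda) - V_\lambda(\mathcal{G}_\lambda)(p_\lambda)\mathcal{A}_\lambda$ from Theorem~\ref{thm:criticality} tends to $\tfrac12 V^2_{\lambda_0}(\mathcal{G}_{\lambda_0})(p)$. By continuity, its sign on a neighbourhood of the BT point along the Hopf curve agrees with the sign of $V^2_{\lambda_0}(\mathcal{G}_{\lambda_0})(p)$, yielding the criticality claim.

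The main subtlety is the well-definedness of the iterate $V^2(\mathcal{G})(p)$, since $V$ is intrinsic only modulo the ideal generated by $F$. The condition $V(F)|_S = 0$ makes $V$ tangent to $S$, so $V(\mathcal{G})|_S$ depends only on $\mathcal{G}|_S$; iterating this argument, $V^2(\mathcal{G})|_S$ is itself an intrinsic function on the critical curve, and the normal-form computation above is therefore unambiguous. The secondary point of care is that the subleading $g f'''/f''$ term in the closed-form expression for $V(\mathcal{G})$ does not affect either the rank-$2$ condition or the sign of $V^2(\mathcal{G})(p)$ at $\lambda_0$, which is immediate as noted above.
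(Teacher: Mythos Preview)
Your strategy---reduce to Li\'enard form and read off $g$, $g'$, $g''$---is the same as the paper's, but you take a shortcut that is not justified as written. You set $(F,Z,Q)=(y-f(x),\partial_x,g(x)\partial_y)$ in the chart of~\eqref{eq:nf}, yet the paper states explicitly that in that chart the \emph{original} vector field equals~\eqref{eq:nf} only up to multiplication by a positive function $w$; the correct triplet (as written just after~\eqref{eq:nf}) is $(y-f(x),\,w\partial_x,\,wg\partial_y)$. Dropping $w$ is a time rescaling, and Theorem~\ref{thm:intrinsic} and Remark~\ref{rm:triplet} give only coordinate- and factorization-invariance of $\mathcal{G}$, $\mathcal{A}$, $V$, not invariance under multiplying $X$ by a positive scalar function. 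So the clause ``legitimate by the coordinate-free and factorization-independent nature \dots'' does not actually license your reduction.

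The paper closes this gap by going through Proposition~\ref{prop:intrinsic}, which keeps the multiplicative factor $\phi$ throughout the computation of $\mathcal{G}(p)$, $V(\mathcal{G})(p)$, $V^2(\mathcal{G})(p)$ and $\mathcal{A}$ for the \emph{original} vector field, and only afterwards drops $\phi$ (``in studying bifurcations it is allowed to rescale time'') to invoke the slow-fast BT criterion of~\cite{sfbt}. Your $w=1$ computations do happen to give the right vanishing/sign conditions---one can check that at a contact point, once the lower-order conditions hold, $\mathcal{G}(p)$, $V(\mathcal{G})(p)$ and $V^2(\mathcal{G})(p)$ each change only by a positive factor under $X\mapsto wX$---but that is an extra lemma you would need to prove, not assume.

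Your treatment of the criticality of the Hopf curve, by taking the limit of the quantity $\sigma_\lambda$ from Theorem~\ref{thm:criticality} along the Hopf branch as $\lambda\to\lambda_0$, is a clean self-contained alternative to the paper's appeal to~\cite{sfbt}; and your paragraph on the well-definedness of $V^2(\mathcal{G})(p)$ (via tangency of $V$ to $S$) fills in a point the paper leaves implicit.
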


\begin{remark}
    One could adapt Theorem~\ref{thm:criticality}   to deal with slow-fast saddle points, i.e.~contact points of singularity index $-1$; in that case $\sigma$ is related to the first saddle quantity of the hyperbolic saddle that unfolds from the contact point.  We leave this to the reader.
\end{remark}

\begin{remark}
    Higher order Lyapunov values for the slow-fast Hopf points are more difficult to compute; we prefer to leave it for potential future work.
\end{remark}

\section{Proof of the results}\label{sec:proof}

\subsection{Proof of Theorem~\ref{thm:intrinsic}}

In this theorem the $\lambda$-dependence is irrelevant so we will not maintain the parameter dependence in the notation during that part, and hence conveniently write
\[
    X=FZ+\epsilon Q + O(\epsilon^2).
\]
That having said, we stress that all operations in this part lead trivially to results that are smoothly depending on $\lambda$.\medskip

\begin{proposition}
    $\mathcal{A}$ is properly defined and is a coordinate free notion.
\end{proposition}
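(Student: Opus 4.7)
The plan is to show two things separately: first, that the expression is coordinate-free, and second, that it is invariant under the factorization equivalence $(F,Z)\mapsto(bF,b^{-1}Z)$ of Remark~\ref{rm:triplet}. The $Q$-ambiguity is irrelevant here since $\mathcal{A}$ does not involve $Q$. Coordinate-freeness is essentially automatic: $Z$ is a vector field and $F$ a scalar, and Lie derivatives of scalars by vector fields are intrinsic geometric operations, so $Z^2(F)$ and $Z^3(F)$ are well-defined scalar functions on $M$, and evaluating at the point $p_\lambda$ (itself defined intrinsically by Proposition~\ref{prop:genericity}) produces a number independent of any chart.

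The substantive part is the invariance under $(F,Z)\mapsto(F',Z'):=(bF,b^{-1}Z)$ with $b$ smooth and non-vanishing near $p$. The key observation is that at the contact point $p$ one has $F(p)=0$ and $Z(F)(p)=0$, so any term in the expansions below carrying a factor of $F$ or $Z(F)$ vanishes at $p$. I would compute in three steps, discarding such terms at each stage:
\begin{align*}
Z'(F')&=b^{-1}Z(bF)=Z(F)+b^{-1}Z(b)\,F,\\
(Z')^2(F')&=b^{-1}Z\bigl(Z(F)+b^{-1}Z(b)F\bigr)=b^{-1}Z^2(F)+(\text{terms in }F,Z(F)),\\
(Z')^3(F')&=b^{-1}Z\bigl((Z')^2(F')\bigr).
\end{align*}
Evaluating the second line at $p$ gives $(Z')^2(F')(p)=b(p)^{-1}Z^2(F)(p)$. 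For the third line, expanding $Z\bigl(b^{-1}Z^2(F)\bigr)=b^{-1}Z^3(F)+Z(b^{-1})Z^2(F)$ and differentiating the lower-order remainder produces only one surviving contribution at $p$, namely $b^{-2}Z(b)Z^2(F)(p)$, which exactly cancels $Z(b^{-1})Z^2(F)(p)$; hence $(Z')^3(F')(p)=b(p)^{-2}Z^3(F)(p)$.

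Plugging into the definition yields
\[
\mathcal{A}'=\frac{(Z')^3(F')}{\bigl((Z')^2(F')\bigr)^2}\bigg|_p=\frac{b^{-2}Z^3(F)}{b^{-2}\bigl(Z^2(F)\bigr)^2}\bigg|_p=\mathcal{A},
\]
proving the claim. The denominator is nonzero by assumption A3 (the genericity hypothesis $Z^2(F)(p)\neq 0$), so $\mathcal{A}$ is indeed well-defined. The only mildly delicate step is bookkeeping the cancellation at order $Z^3$, which works precisely because the surviving non-$F$, non-$Z(F)$ terms appear with opposite signs coming from the derivatives of $b^{-1}$ versus $b^{-1}Z(b)$; the rest is straightforward expansion that I would carry out modulo the ideal generated by $F$ and $Z(F)$.
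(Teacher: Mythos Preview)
Your proposal is correct and follows essentially the same approach as the paper: both establish well-definedness by direct computation of $Z'(F')$, $(Z')^2(F')$, $(Z')^3(F')$ under the factorization change $(F,Z)\mapsto(bF,b^{-1}Z)$, tracking terms modulo the ideal generated by $F$ and $Z(F)$, and observing the same cancellation at third order (the paper writes ``two additional terms cancel each other''); your remark that coordinate-freeness is automatic from the intrinsic nature of Lie derivatives is exactly how the paper concludes as well.
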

\begin{proof}
    Note that by assumption A3, the denominator in $\mathcal{A}$ is nonzero.  Hence $\mathcal{A}$ is properly defined if we prove that it does not depend on the choice of the factorization $F.Z$. To that end, let $(F',Z') = (cF,1/cZ)$:
    \[
    \textstyle
        Z'(F') = \frac1c Z(cF) = \frac1c Z(c) F + Z(F)
    \]
    and
    \[
    \textstyle
        (Z')^2(F')  = \frac1c Z (\frac1c Z(c) F + Z(F)) = \frac1{c^2}Z(c)Z(F) + \frac1c Z(\frac1c Z(c))F + \frac1cZ^2(F).
    \]
    At the point $p$, this evaluates to $\frac1c Z^2(F)$, since $F(p)=Z(F)(p)=0$ (assumption A1: $p\in S$ and assumption A3).  Let us apply $Z'$ once more,
    but ignoring the terms that are $O(Z(F))$ and $O(F)$ since we will only need to evaluate the expression at the point $p$, where $F(p) = Z(F)(p)=0$; only three terms remain: one from deriving the first term in $(Z')^2(F')$, and two from the last term.
    \[\textstyle
        (Z')^3(F')(p)  =  \frac{1}{c^3}Z(c)Z^2(F)(p) + \frac1c Z(\frac1c)Z^2(F)(p) + \frac1{c^2} Z^3(F)(p).
    \]
    Two additional terms cancel each other, so
    \[
      (Z')^3(F')(p)  =  \frac1{c^2} Z^3(F)(p).
    \]
    We conclude that
    \[
      \frac{(Z')^3(F')(p)}{(Z')^2(F')(p)^2}  =  \frac{Z^3(F)(p)}{Z^2(F)(p)^2}.
    \]
    Since we now know that $\mathcal{A}$ is a properly defined notion associated to the slow-fast system $X$ and since it is defined using intrinsic notions such as Lie derivatives, it is a coordinate free value.
\end{proof}

\begin{proposition}
    $\mathcal{G}$ is a properly defined function and is a coordinate free notion.
\end{proposition}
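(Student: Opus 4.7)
The plan is to verify the two claims in the proposition --- coordinate-freeness, and well-definedness under the factorization equivalence of Remark~\ref{rm:triplet} --- in turn, mirroring the strategy of the preceding proof for $\mathcal{A}$.

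Coordinate-freeness is immediate once the Riemannian metric $g$ is fixed: the area form $\Omega$, the gradient $\grad$, and the Lie derivative $Z(F)$ are all intrinsic to $M$, and $\mathcal{G}$ is built from products and $\Omega$-pairings of such objects, hence is chart-independent.

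For the factorization invariance I would check invariance under the two generators listed in Remark~\ref{rm:triplet}. The move $Q \mapsto Q + cZ$ is trivial from antisymmetry, $\Omega(Q + cZ, Z) = \Omega(Q, Z)$, and it does not affect the second factor of $\mathcal{G}$. The move $(F, Z) \mapsto (bF, b^{-1}Z)$ is the delicate one; I expect the two factors of $\mathcal{G}$ to scale by $b^{-1}$ and $b$ respectively. The scaling $\Omega(Q, b^{-1}Z) = b^{-1}\Omega(Q, Z)$ is immediate. For the other factor, I would first expand
\[
    b^{-1}Z(bF) = Z(F) + b^{-1}Z(b)\,F,
\]
and then, using $F|_{S_\lambda} = 0$ together with the Leibniz rule,
\[
    \grad(bF)|_{S_\lambda} = b\,\grad F|_{S_\lambda},\qquad \grad(b^{-1}Z(bF))|_{S_\lambda} = \grad(Z(F))|_{S_\lambda} + b^{-1}Z(b)\,\grad F|_{S_\lambda}.
\]
Substituting into $\Omega$, bilinearity yields a cross term proportional to $\Omega(\grad F, \grad F)$, which vanishes by antisymmetry; what remains is exactly $b\,\Omega(\grad F, \grad Z(F))|_{S_\lambda}$. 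Multiplying by the $b^{-1}$ from the first factor, the two powers of $b$ cancel.

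The main obstacle is bookkeeping in the $(F,Z)\mapsto (bF, b^{-1}Z)$ case: the Leibniz rule generates several terms and one must carefully retain only those surviving the restriction to $S_\lambda$ before the antisymmetry of $\Omega$ can be used to eliminate the remaining cross term. Everything else is essentially formal.
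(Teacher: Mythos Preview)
Your proposal is correct and follows essentially the same approach as the paper: the paper also verifies invariance under $Q\mapsto Q+cZ$ via antisymmetry of $\Omega$, and under $(F,Z)\mapsto(cF,c^{-1}Z)$ by expanding $\nabla(cF)$ and $\nabla(Z'F')$ via the Leibniz rule, discarding the $O(F)$ terms on $S$, and letting the surviving cross term $\Omega(\nabla F,\nabla F)$ vanish so that the factors scale by $c^{-1}$ and $c$. The only cosmetic difference is that the paper carries the $O(F)$ terms symbolically and restricts to $S$ at the very end, whereas you restrict first.
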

\begin{proof}
    Several points need to be discussed.  First, $Q$ is only defined properly on the critical set $S$, and even then it is only defined up to $O(Z)$.  In other words, the value for $\mathcal{G}$ must not change upon changing $Q$ to $Q + cZ$ for some function $c$.  Just observe that
    \[
        \Omega(Q+cZ,Z) = \Omega(Q,Z) + c\Omega(Z,Z) = \Omega(Q,Z).
    \]
    A second point is the invariance w.r.t.~the factorization of $FZ$; we deal with it like in the proof of the previous proposition: let $(F',Z',Q') = (cF,1/cZ,Q)$.  Then
    \[
        \Omega(Q',Z') = \Omega(Q,(1/c)Z) = (1/c)\Omega(Q,Z).
    \]
    Recall from the previous proposition that $Z'(F')=Z(F) + \frac{1}{c}Z(c)F$, so
    \[
        \nabla Z'(F') = \nabla Z(F) + \frac{1}{c}Z(c)\nabla F + F\nabla\left(\frac{1}{c}Z(c)\right),
    \]
    and $\nabla F' = \nabla (cF) = c\nabla F + F\nabla c$.  Combining these equations, we find
    \[
        \Omega (\nabla F',\nabla Z'F') = \Omega(c\nabla F,\nabla Z(F) + \frac{1}{c}Z(c)\nabla F)) + O(F) = c\Omega(\nabla F,\nabla Z(F)).
    \]
    We conclude that the value of $\mathcal{G}$ has only changed by an amount that is $O(F)$ upon changing $(F,Z,Q)$ to $(F',Z',Q')$.  Since the domain of $\mathcal{G}$ is restricted to $S=\{F=0\}$ we have proved the proposition.
\end{proof}

\begin{proposition}
    $\mathcal{G}$ is independent of the metric.
\end{proposition}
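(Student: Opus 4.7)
The plan is to show that the two factors $\Omega(Q,Z)$ and $\Omega(\nabla F, \nabla Z(F))$ composing $\mathcal{G}$ scale by reciprocal multipliers under a change of metric, so their product is invariant. The key geometric fact is that on the oriented surface $M$, any two Riemannian area forms differ by multiplication by a smooth positive function: if $g,g'$ are metrics with area forms $\Omega,\Omega'$, then $\Omega' = \phi\,\Omega$ for some $\phi > 0$.

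First I would establish the pointwise identity
\[
    \Omega(\nabla F,\nabla G)\cdot\Omega = dF\wedge dG,
\]
valid for any smooth functions $F,G$ on $M$ and any choice of metric. A short computation in a local chart shows that both sides equal $(\partial_x F\,\partial_y G - \partial_y F\,\partial_x G)\, dx\wedge dy$: the factor $1/\sqrt{\det g}$ produced by $\Omega(\nabla F,\nabla G)$ cancels exactly against the $\sqrt{\det g}$ in $\Omega$. Equivalently, this pins down the scalar $\Omega(\nabla F,\nabla G)$ as the quotient of $2$-forms $(dF\wedge dG)/\Omega$, whose numerator is manifestly metric-independent.

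Now under $g\mapsto g'$ with $\Omega' = \phi\,\Omega$, the vector fields $Q$ and $Z$ are metric-free (they arise from the slow-fast decomposition of $X$ alone), so bilinearity of the area form gives $\Omega'(Q,Z) = \phi\cdot\Omega(Q,Z)$. By the identity above applied with $G = Z(F)$,
\[
    \Omega'(\nabla' F,\nabla' Z(F)) = \frac{dF\wedge d Z(F)}{\Omega'} = \frac{1}{\phi}\,\Omega(\nabla F,\nabla Z(F)).
\]
Multiplying these, the factors $\phi$ and $1/\phi$ cancel and $\mathcal{G}_{g'} = \mathcal{G}_{g}$, proving metric-independence.

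I expect no real obstacle; the only nontrivial ingredient is the identity $\Omega(\nabla F,\nabla G)\cdot\Omega = dF\wedge dG$, a two-dimensional phenomenon in which the two occurrences of the metric hidden inside the gradients are precisely compensated by the single factor of $\sqrt{\det g}$ in $\Omega$. An intrinsic derivation via musical isomorphisms and the Hodge star is available, but the local coordinate calculation is the quickest route; note also that the argument works verbatim for every $\lambda$, not just at $\lambda_0$.
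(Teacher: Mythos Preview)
Your argument is correct, and at heart it is the same local computation as in the paper: writing the metric in a chart as a matrix $M$, the gradient contributes a factor $M^{-1}$ and the area form a factor $\sqrt{\det M}$, and with two gradients and two copies of $\Omega$ these cancel. The paper presents this by fixing a chart and showing that the product computed with any metric $g$ agrees with the product computed with the Euclidean metric on that chart; you instead compare two metrics directly via the scaling $\Omega' = \phi\,\Omega$.

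Where your write-up adds something is the identity $\Omega(\nabla F,\nabla G)\cdot\Omega = dF\wedge dG$, which makes the metric-independence of the product conceptually transparent: the second factor of $\mathcal{G}$ is literally the quotient of the metric-free $2$-form $dF\wedge d(ZF)$ by $\Omega$, so it scales like $\Omega^{-1}$, while the first factor scales like $\Omega$. The paper alludes to this covariant/contravariant balance in its opening sentence but then carries out the verification by brute-force matrix algebra. Your route is shorter and more intrinsic; the paper's route has the minor advantage of being entirely self-contained (no appeal to musical isomorphisms or Hodge theory, even implicitly).
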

\begin{proof}
    The short answer is that $\mathcal{G}$ is multi-linearly composed from two co-variant and two contra-variant elements; we prefer to give a more elaborate proof though.
    Let $g$ be a metric on the manifold $M$.  Let $(E,F,G)$ be the coefficients of the first fundamental form, expressed in some chart in $(x,y)$ coordinates.  So with respect to the metric $g$ we have
    \[
        \langle a(q), b(q)\rangle = a(q)^T\left(\begin{array}{cc}E(q) & F(q)\\F(q) & G(q)\end{array}\right)b(q),
    \]
    where $a(q)$ and $b(q)$ are two vectors at $q$ expressed in $(x,y)$-coordinates, $T$ denotes the transpose.  We recall that the gradient of a function $\alpha$ in this metric is such  that
    \[
        \langle a(q), (\nabla f)(q)\rangle = a(f)(q),
    \]
    for all vector fields $a$.  Since in this chart $a(f)(q) = a_x(q)\frac{\partial f}{\partial x}(q) + a_y(q) \frac{\partial f}{\partial y}(q)$, we see
    that
    \[
    a(q)^T\left(\begin{array}{cc}E(q) & F(q)\\F(q) & G(q)\end{array}\right)(\nabla f)(q)=\left(\begin{array}{cc}a_x(q) & a_y(q)\end{array}\right)\left(\begin{array}{c}\frac{\partial f}{\partial x}(q)\\\frac{\partial f}{\partial y}(q)\end{array}\right).
    \]
    Since this is true for all $a$, we can apply it for $a=(1,0)$ and $a=(0,1)$ to find
    \[
        \left(\begin{array}{cc}E(q) & F(q)\end{array}\right)(\nabla f)(q)=\frac{\partial f}{\partial x}(q)
    \]
    and
    \[
        \left(\begin{array}{cc}F(q) & G(q)\end{array}\right)(\nabla f)(q)=\frac{\partial f}{\partial y}(q).
    \]
    It is a lengthy way of saying that
    \[
        (\nabla f)(q) = \left(\begin{array}{cc}E(q) & F(q)\\F(q) & G(q)\end{array}\right)^{-1}\left(\begin{array}{c}\frac{\partial f}{\partial x}(q)\\\frac{\partial f}{\partial y}(q)\end{array}\right).
    \]
    Let $\Omega$ be the area form associated to the metric.  Then it is known that $\Omega = \sqrt{EG-F^2}(du\wedge dv)$.
    So
    \[
        \Omega_q (\nabla F(q),\nabla ZF(q)) = \sqrt{EG-F^2}(q) \det\left(\nabla F(q) | \nabla ZF(q)\right).
    \]
    If $M = \left(\begin{smallmatrix}E&F\\F&G\end{smallmatrix}\right)$, then this evaluates to
    \[
        \Omega_q (\nabla F(q),\nabla ZF(q)) = \sqrt{EG-F^2}(q) \det\left(M_q^{-1}\nabla^\circ F(q) | M_q^{-1}\nabla^\circ ZF(q)\right),
    \]
    where $\nabla^\circ$ is the gradient operator in $(x,y)$-coordinates w.r.t.~the standard metric. We continue:
    \begin{align*}
        \Omega_q (\nabla F(q),\nabla ZF(q)) &= \sqrt{EG-F^2}(q) \det\left(M_q^{-1}.\left(\nabla^\circ F(q) | \nabla^\circ ZF(q)\right)\right)
            \\&= \frac{\sqrt{EG-F^2}(q)}{\det M_q} \:.\:\Omega^{\circ}_q(\nabla^\circ F(q),\nabla^\circ Z(F)(q)),
    \end{align*}
    where $\Omega^\circ$ is the area form associated to the standard metric in the $(x,y)$-chart.  Similarly,
    \[
        \Omega_q(Q(q),Z(q)) = \sqrt{EG-F^2}(q)\Omega^\circ_q(Q(q),Z(q)).
    \]
    We find finally that
    \[
        \Omega_q(Q(q),Z(q))\Omega_q (\nabla F(q),\nabla ZF(q)) = \Omega^\circ_q(Q(q),Z(q))\Omega^{\circ}_q(\nabla^\circ F(q),\nabla^\circ Z(F)(q)),
    \]
    and since this is true for all  choices of the metric $g$, we conclude that $\mathcal{G}$ does not depend on it.
\end{proof}

\subsection{An abstract normal form}

As before we prefer to leave out the dependence on $\lambda$ in the notations.  The stated results will be valid in a $\lambda$-smooth way though.

\begin{proposition}\label{prop:nf}
    Let the slow-fast system $X=FZ+\epsilon Q+O(\epsilon^2)$ satisfy Assumptions A1--A2 and let $p\in S=\{F=0\}$ be a contact point satisfying Assumption A3.
    Then there exists a regular chart of $M$ where $X$ is given by
    \[
    \begin{vf}
        \dot{x} &=& \phi(x,y,\epsilon)(y-f(x))\\
        \dot{y} &=& \epsilon \phi(x,y,\epsilon).g(x,y,\epsilon)
    \end{vf}
    \]
    The point $p$ is at the origin in this chart, and $f(0)=f'(0)=0$, $f''(0)\not=0$.  Furthermore, $\phi(0,0,0)=1$.
    We could even additionally ensure that $f(x)=x^2/2$.
\end{proposition}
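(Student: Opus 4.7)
The plan is to construct the chart in four stages: a flow-box straightening of $Z$, two applications of the implicit function theorem, and a pair of rescalings.

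First, by Assumption A2 the vector field $Z$ is nonzero at $p$, so the flow-box theorem yields local coordinates $(x,y)$ with $p=(0,0)$ in which $Z=\partial/\partial x$. In this chart Assumption A3 reads $F(0,0)=0$, $F_x(0,0)=0$, $F_{xx}(0,0)\neq 0$, while regularity of $S$ (Assumption A1) forces $F_y(0,0)\neq 0$ (the two components of $\nabla F(0,0)$ cannot both vanish). The implicit function theorem expresses $S$ as $\{y=f(x)\}$ near the origin, with $f(0)=f'(0)=0$ and $f''(0)=-F_{xx}(0,0)/F_y(0,0)\neq 0$, and Hadamard's lemma provides a smooth function $\psi$ with $\psi(0,0)=F_y(0,0)\neq 0$ such that $F(x,y)=(y-f(x))\psi(x,y)$.

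Next I would absorb the $\epsilon$-dependence of the $x$-component. In the chart above, $\dot x=(y-f(x))\psi(x,y)+\epsilon A(x,y,\epsilon)$ for some smooth $A$. Since $\partial_y(\dot x)|_{(0,0,0)}=\psi(0,0)\neq 0$, the implicit function theorem (with $\epsilon$ as a parameter) represents the zero set $\{\dot x=0\}$ as a smooth graph $y=f^{*}(x,\epsilon)$ with $f^{*}(x,0)=f(x)$. The smooth $\epsilon$-family of $y$-translations $\tilde y:=y-f^{*}(x,\epsilon)+f(x)$ moves this zero set onto $\{\tilde y=f(x)\}$, and Hadamard's lemma factors the $x$-component as $\phi(x,\tilde y,\epsilon)(\tilde y-f(x))$, with $\phi(x,\tilde y,0)=\psi(x,\tilde y)$. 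The $y$-component in the new chart is $\dot y+[f'(x)-f^{*}_x(x,\epsilon)]\dot x$, which vanishes identically at $\epsilon=0$; writing it as $\epsilon G(x,\tilde y,\epsilon)$ and setting $g:=G/\phi$ (well-defined since $\phi(0,0,0)\neq 0$) produces $\dot{\tilde y}=\epsilon\phi g$.

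Two rescalings finish the argument. A linear dilation $\bar x=x/\psi(0,0)$ preserves the product structure, preserves $f(0)=f'(0)=0$ and $f''(0)\neq 0$, and adjusts the leading factor to satisfy $\phi(0,0,0)=1$. For the optional identity $f(x)=x^2/2$, the one-variable Morse lemma supplies (after possibly flipping the sign of $y$ to absorb the sign of $f''(0)$) a local diffeomorphism $h$ with $h(0)=0$ and $f(h(u))=u^2/2$; the substitution $u=h^{-1}(x)$ preserves the product shape, and a joint scaling $(u,\tilde y)\mapsto(cu,c^2\tilde y)$ (which maps $\tilde y-u^2/2$ into $(\bar{\tilde y}-\bar u^2/2)/c^2$, hence preserves the quadratic normal form) restores $\phi(0,0,0)=1$ upon choosing $c$ equal to the current value of the leading factor. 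The main obstacle is purely bookkeeping: verifying that each change of chart is smooth jointly in $(x,y,\epsilon)$ and that the resulting vector field keeps the prescribed product structure with a \emph{shared} factor $\phi$ in both components. Once the parameter-dependent implicit function step is carried out, the algebraic rescalings are routine.
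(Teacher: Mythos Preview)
Your argument is correct and in some respects cleaner than the paper's. The paper proceeds differently: it first passes to the intrinsic chart $(u,v)=(Z(F),F)$, checks that this is a regular change of variables at $p$ (this is where $Z^2F(p)\neq 0$ is used, and incidentally it yields the smooth dependence $p_\lambda$ of Proposition~\ref{prop:genericity}), and only then rectifies the fast flow at $\epsilon=0$ by solving a first-order PDE via the method of characteristics; the factorization $F=\alpha(u,y)(y-f(u))$ and the Morse normalization come last. Your route is more elementary: flow-boxing $Z$ right away already forces $\dot y=O(\epsilon)$, so no PDE is needed, and the Hadamard/IFT factorizations do the rest. One point where your argument is actually more explicit than the paper's is the $\epsilon$-dependent translation $\tilde y=y-f^*(x,\epsilon)+f(x)$: the paper's write-up stays in an $\epsilon$-independent chart and jumps directly to $\dot u=\phi(u,y,\epsilon)(y-f(u))$ without spelling out why the residual $O(\epsilon)$ in $\dot u$ can be absorbed into the factor $\phi$; your implicit-function step makes that clear. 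What you give up is the by-product of the $(ZF,F)$-chart, namely the direct proof that $p_\lambda$ depends smoothly on $\lambda$, but that is not part of the proposition's statement.
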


\begin{remark}
    It does not automatically mean that the triplet $(F,Z,Q)$ is such that $F= y-f(x) $ in this chart, but  in view of Remark~\ref{rm:triplet} we can replace the factorization of $FZ$ in a way that $F$ indeed equals $y=f(x)$.
\end{remark}

The proof of Proposition~\ref{prop:nf} is inspired from the one in \cite{Mono}, we have added a modified proof here for the sake of completeness.  The rest of the section is devoted to proving Proposition~\ref{prop:nf}.
\begin{lemma}
    Let $u=ZF$, $v=F$.  Then $(u,v)$ are coordinates of a regular chart of $M$ bringing $p$ at the origin and such that
    \[
        \begin{vf}
        \dot{u} &=& v. Z^2F(u,v) + O(\epsilon),\\
        \dot{v} &=& vu + O(\epsilon).
        \end{vf}
    \]
\end{lemma}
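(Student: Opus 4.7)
The plan is straightforward: first verify that $(u,v) := (ZF, F)$ really is a local chart around $p$, and then compute the two Lie derivatives $X(u)$ and $X(v)$ directly from the decomposition $X = FZ + \epsilon Q + O(\epsilon^2)$.

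For the first step, I note that $v(p) = F(p) = 0$ by assumption A1 (since $p \in S$) and $u(p) = Z(F)(p) = 0$ since $p$ is a contact point (assumption A3). So in the proposed chart $p$ sits at the origin, as required. To see that $(u,v)$ is genuinely a chart, I need $du \wedge dv \neq 0$ at $p$, i.e.\ that $d(ZF)(p)$ and $dF(p)$ are linearly independent. Suppose for contradiction that $d(ZF)(p) = \alpha\, dF(p)$ for some scalar $\alpha$. Evaluate both co-vectors on the tangent vector $Z(p)$: the right-hand side gives $\alpha Z(F)(p) = 0$, while the left-hand side gives $Z(Z(F))(p) = Z^2F(p)$, which is nonzero by assumption A3. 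This contradiction forces the two differentials to be independent, so by the inverse function theorem $(u,v)$ is a regular chart on a neighborhood of $p$.

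For the second step, I simply apply $X$ to the coordinate functions. Since $X(h) = F\cdot Z(h) + \epsilon Q(h) + O(\epsilon^2)$ for any smooth $h$, I get
\[
\dot v = X(F) = F\cdot Z(F) + \epsilon Q(F) + O(\epsilon^2) = vu + O(\epsilon),
\]
which is exactly the second equation. Similarly,
\[
\dot u = X(ZF) = F\cdot Z(ZF) + \epsilon Q(ZF) + O(\epsilon^2) = v\cdot Z^2F + O(\epsilon).
\]
The quantity $Z^2F$ is an intrinsic smooth function on $M$, hence can be pulled back to a smooth function of $(u,v)$ in the chart — and this is exactly the notation $Z^2F(u,v)$ used in the statement. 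This yields the first equation.

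The only substantive point is the regularity of the chart, which rests exactly on the genericity assumption A3; without $Z^2F(p) \neq 0$, the differentials $dF$ and $d(ZF)$ could be proportional at $p$ and the construction would break down. The rest reduces to the definition of a Lie derivative applied to the two coordinate functions, so no computational difficulty is expected. The $\lambda$-smoothness claim of the surrounding discussion follows since the change of coordinates $(x,y) \mapsto (ZF, F)$ depends smoothly on $\lambda$ through $F_\lambda$ and $Z_\lambda$.
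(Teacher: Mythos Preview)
Your proof is correct and follows essentially the same route as the paper's: both argue by contradiction that the differentials of $F$ and $ZF$ are independent at $p$ by pairing them against $Z(p)$, and then compute $\dot u,\dot v$ directly from $X_0=FZ$. The only small point the paper makes explicit and you leave implicit is that $dF(p)\neq 0$ (from assumption A1, regularity of the critical curve), which is what justifies writing linear dependence in the form $d(ZF)(p)=\alpha\,dF(p)$.
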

\begin{proof}
    Consider any  chart in $(x,y)$ coordinates, and take the standard metric.  Since $Z^2F(p)\not=0$, at least one of the components of $\nabla ZF$ must be nonzero.  We have also assumed that the critical curve is smooth so $\nabla F$ is nowhere the zero vector.  Suppose $\nabla ZF = c\nabla F$ at the point $p$.  Then clearly also $Z(ZF))$ and $Z(F)$ should be proportional by the same constant.  Since $Z(F)(p)=0$ and $Z(ZF)(p)\not=0$, this is a contradiction.  The rest of the statement can be verified for $\epsilon=0$.  Recall that $X_0 = FZ$, so
    \[
        \begin{vf}
        \dot{u}|_{\epsilon=0} &=& X_0(ZF) = F.Z^2F = v . Z^2F(u,v)    \\
        \dot{v}|_{\epsilon=0} &=& X_0(F) = F.ZF = uv.
        \end{vf}
    \]
    This proves the result.
\end{proof}

\begin{remark}\label{rm:proofpropgen}
    In the proof, the map $q\in M\mapsto (F(q),Z(F)(q))$ is shown to be smoothly invertible.  If we for a moment consider again the parameter-dependent context and suppose that we have assumed that the conditions A1--A3 are satisfied at $\lambda=\lambda_0$, then this would imply the possibility to apply the implicit function theorem to the map $(q,\lambda) = (F_(\lambda)(q),Z_\lambda(F_\lambda)(q))$ at $(q,\lambda)=(p,\lambda_0)$.  This proves Proposition~\ref{prop:genericity}.
\end{remark}

From this chart, we can work towards a chart where the flow is rectified.  One could just refer to the flow box theorem, or we could just state that we search for $y=\alpha(u,v)$ so that $\dot{y}$ becomes $0$.  Then $\alpha$ satisfies the PDE $\alpha_u.Z^2F(u,v) + u\alpha_v=0$.  The method of characteristics guarantees a solution satisfying the boundary condition $\alpha(0,v)=v$.  We find
\[
    \dot{u} = F.\psi,\dot{y} = 0,
\]
with $\psi\not=0$.
It is not so hard to deduce, based on the intrinsic conditions A1--A3, that $F=\alpha(u,y).(y-f(u))$ for some function $f$ with $f(0)=f'(0)=0$, $f''(0)\not=0$, $\alpha\not=0$.  We then obtain
    \[
    \begin{vf}
        \dot{u} &=& \phi(u,y,\epsilon)(y- f(u))\\
        \dot{y} &=& O(\epsilon).
    \end{vf}
    \]
We rewrite the $O(\epsilon)$-terms in the $\dot{y}$-equation as $\phi(u,y,\epsilon).g(u,y,\epsilon)$.
From that moment on, we can easily find a change of coordinates $u=\theta(x)$ so that $f(\theta(x))=x^2/2$.
Using a final scaling $x=c\tilde{x},y=c^2/2\tilde{y}$, we can ensure $\phi(0,0,0)=1$.

\subsection{The skewness invariants in normal form}

As before we prefer to leave out the dependence on $\lambda$ in the notations.  It is only in the next subsection that we deal with bifurcations.

\begin{proposition}\label{prop:intrinsic}
    There exists a change of coordinates so that the slow-fast vector field  is given by
    \[
    \begin{vf}
        \dot{x} &=& \phi(x,y,\epsilon)(y-x^2/2)\\
        \dot{y} &=& \epsilon\phi(x,y,\epsilon)(g_0 + g_1x + g_2 x^2 + O(x^3) + O(y-x^2/2) + O(\epsilon)  ))
    \end{vf}
    \]
    with
    \[
        g_0 = -\mathcal{G}(p),\qquad g_1 = V(\mathcal{G})(p) + O(g_0) ,\qquad g_2 =  g_1\mathcal{A} -\frac12V^2(\mathcal{G})(p) + O(g_0).
    \]
    and
    \[
        \phi(x,0,0) = 1 -\frac{\mathcal{A}}{3}x + O(x^2).
    \]
\end{proposition}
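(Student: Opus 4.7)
The plan is to leverage Theorem~\ref{thm:intrinsic}: both $\mathcal{A}$ and $\mathcal{G}$ are chart-independent and $\mathcal{G}$ is also metric-independent, so I may work inside the normal-form chart of Proposition~\ref{prop:nf} equipped with the flat Euclidean metric. In those coordinates the $\epsilon=0$ triplet takes the form $F = y - x^2/2$, $Z = \phi(x,y,0)\,\partial_x$, $Q|_S = \phi(x,y,0)\, g(x,y,0)\,\partial_y$, and $\Omega = dx\wedge dy$. The goal is then to express each of $\mathcal{A}$, $\mathcal{G}(p)$, $V(\mathcal{G})(p)$, $V^2(\mathcal{G})(p)$ as explicit Taylor data of $\phi$ and $g$ at the origin, and to invert those identities to recover $g_0,g_1,g_2$ and the leading $x$-jet of $\phi(\cdot,0,0)$.

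For $\mathcal{A}$ and the expansion of $\phi(x,0,0)$, I iterate: $Z(F) = -x\phi$ and $Z^2(F) = -\phi^2 - x\phi\phi_x$, so $Z^2(F)(p) = -1$; differentiating once more and evaluating at $p$ (where $F$ and $Z(F)$ both vanish) yields $Z^3(F)(p) = -3\phi_x(0,0,0)$. Hence $\mathcal{A} = -3\phi_x(0,0,0)$, which gives $\phi(x,0,0) = 1 - (\mathcal{A}/3)\,x + O(x^2)$. For $\mathcal{G}(p)$, a direct calculation produces $\Omega(Q,Z) = -\phi^2 g$ and $\Omega(\grad F,\grad Z(F)) = \phi + x\phi_x + x^2\phi_y$, whose product evaluated at $p$ equals $-g_0$, establishing $g_0 = -\mathcal{G}(p)$.

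For the $V$-derivatives, I first extract $V$ from the defining conditions $V(F) = 0$, $V(Z(F)) = 1$. Writing $V = V_1\partial_x + V_2\partial_y$, the first forces $V_2 = xV_1$ and the second then gives $V_1 = -1/\Psi$ on $S$, where $\Psi := (\phi + x\phi_x + x^2\phi_y)|_S$; in particular $V_1(0) = -1$ and $V_1'(0) = -2\mathcal{A}/3$. Since $V$ is tangent to $S$, iteration on functions restricted to $S$ collapses to the one-dimensional rule $V(h)\circ\gamma(x) = V_1(x)\,(h\circ\gamma)'(x)$ along $\gamma(x) = (x, x^2/2)$. Setting $\psi(x) := \mathcal{G}(\gamma(x)) = -\Phi(x)^2 G(x)\Psi(x)$, with $\Phi(x) := \phi(\gamma(x),0)$ and $G(x) := g(\gamma(x),0) = g_0 + g_1 x + g_2 x^2 + O(x^3)$, I get $V(\mathcal{G})(p) = -\psi'(0)$ and $V^2(\mathcal{G})(p) = V_1(0) V_1'(0)\psi'(0) + V_1(0)^2 \psi''(0)$. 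Expanding the product using $\Phi(0) = \Psi(0) = 1$, $\Phi'(0) = -\mathcal{A}/3$, $\Psi'(0) = -2\mathcal{A}/3$ then yields identities $V(\mathcal{G})(p) = g_1 + O(g_0)$ and $V^2(\mathcal{G})(p) = -2g_2 + 2\mathcal{A}g_1 + O(g_0)$, which invert to the stated formulas.

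The main obstacle is the bookkeeping for $V^2(\mathcal{G})(p)$: the two successive product rules applied to $-\Phi^2 G \Psi$ generate many cross-terms depending on $\Phi''(0)$, $\Psi''(0)$, $\phi_y(0,0,0)$, $\phi_{xx}(0,0,0)$ and other mixed partials that do not appear in the target formula. The key observation that makes everything close is that every such spurious contribution carries an explicit factor of $g_0 = G(0)$, so they cleanly collect into the single $O(g_0)$ remainder — and this level of sharpness is exactly what Theorem~\ref{thm:criticality} needs, since Proposition~\ref{prop:intrinsic} is applied at Hopf bifurcation points where $g_0 = 0$ and those $O(g_0)$ terms are inert.
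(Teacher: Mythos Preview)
Your proposal is correct and follows essentially the same route as the paper: both compute $\mathcal{A}$, $\mathcal{G}$, $V(\mathcal{G})$, $V^2(\mathcal{G})$ in the normal-form chart of Proposition~\ref{prop:nf} with the Euclidean metric, identify $V$ as the suitably normalized tangent to $S$, and invert the resulting Taylor identities modulo $O(g_0)$. The only cosmetic difference is that you pull everything back along the parametrization $\gamma(x)=(x,x^2/2)$ and manipulate one-variable functions $\Phi,\Psi,G$, whereas the paper keeps the tangent operator $\partial_x = \tfrac{\partial}{\partial x} + x\tfrac{\partial}{\partial y}$ and computes $V(\mathcal{G})$, $V^2(\mathcal{G})$ directly after setting $g_0=0$; the bookkeeping and the final identities $V^2(\mathcal{G})(p)=2\mathcal{A}g_1-2g_2+O(g_0)$ coincide.
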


\begin{remark}
    Let us comment on the skewness, \emph{in case of a singular contact point, eg.~with $g_0=0$}.  In the fast vector field, eg.~for $\epsilon=0$, we have $\dot{x} = (1-\frac13\mathcal{A}x + O(x^2)(y-x^2/2)$.  A change of coordinates $x=\bar x(1-\frac16\mathcal{A}\bar x) + O(\bar x^2)$ can change it into $\dot{\bar x} = (1+O(y,\epsilon))(y-f(\bar x))$, hence erasing the principle part of $\phi$.  We would
    then see that
    \[
        f(\bar x) = \frac12\bar x^2 - \frac16\mathcal{A}\bar x^3 + O(\bar x^4).
    \]
    In that sense, $\mathcal{A}$ measures the asymmetry in the critical curve up to first order.  We therefore call $\mathcal{A}$ the skewness of the fast system.  The skewness of the slow system can be seen from the equivalent vector field
    \[
    \begin{vf}
        x' &=& y-x^2/2\\
        y' &=& \epsilon (g_1x + g_2 x^2 + O(x^3) + O(y-x^2/2) + O(\epsilon)  ))
    \end{vf}
    \]

    If we then apply a change of coordinates $x = \bar x  - \frac{g_2}{g_1}\bar x^2 + O(\bar x^3)$, then the vector field changes into
    \[
    \begin{vf}
        x' &=& y-f(\bar x)\\
        y' &=& \epsilon (g_1\bar x  + O(y-f(\bar x)) + O(\epsilon)  ))
    \end{vf}
    \]
    with
    \[
        f(\bar x) = \frac12\bar x^2  - \frac{g_2}{g_1}\bar x^3 + O(\bar x^4)
    \]
    Again, we see that $g_2/g_1 = \mathcal{A} - \frac12\frac{V^2(\mathcal{G})(p)}{V(\mathcal{G})(p)}$ measures the skewness of the equivalent system.  We could attribute  this skewness to the slow-fast system, and since $\mathcal{A}$ was solely attributed to the fast system, it makes sense to say that the skewness of the slow-fast system is composed of the fast skewness $\mathcal{A}$ and the slow skewness $V^2(\mathcal{G})(p)$.
\end{remark}

\begin{proof}
We rewrite the result from Proposition~\ref{prop:nf} in a more convenient form:
\[
    \begin{vf}
        \dot{x} &=& \phi(x,y,\epsilon)(y-x^2/2)  \\
        \dot{y} &=& \epsilon\phi(x,y,\epsilon)( g (x) + (y-x^2/2)h(x,y) + O(\epsilon))
    \end{vf}
\]
where $\phi(0,0,0)=1$.
Let us identify a triplet $(F,Z,Q)$:
\[
    F = y- x^2/2,\quad Z = \phi(x,y,0)\frac{\partial}{\partial x},
\]
and
\[
    Q = \phi_1(x,y)(y- x^2/2)\frac{\partial}{\partial x} + \phi(x,y,0)(g(x) + (y-x^2/2)h(x,y))\frac{\partial}{\partial y}
\]
where $\phi_1$ is the Taylor coefficient with $\epsilon^1$ of $\phi(x,y,\epsilon)$. The nice thing is that $Q$ is only defined at $F=0$, so we might as well use
\[
    Q = \phi(x,y,0)g(x)\frac{\partial}{\partial y}.
\]
\begin{remark}\label{rm:slowvf}
    $Q$ is not the slow vector field; in fact the slow vector field is the unique field $\tilde{Q} = Q + \alpha Z$ tangent to $S$, i.e.~for which $\tilde{Q}(F)=0$.  It is not so hard that this would entail choosing
    \[
        \tilde{Q} = Q - \frac{Q(F)}{Z(F)}Z = \phi(x,y,0)\frac{g(x)}{f'(x)}\partial_x,
    \]
    where $\partial_x := \left(x\frac{\partial}{\partial y} +  \frac{\partial}{\partial x}\right)$.  Replacing $Q$ by $\tilde{Q}$ in the sequel does not change anything though.
\end{remark}
As proven in Theorem~\ref{thm:intrinsic}, we are allowed to do the computations in any coordinate system using any metric and any choice of factoring $F.Z$.
We find
\[
    \Omega(Q,Z) = \left|\begin{array}{cc} 0 & \phi  \\ \phi g & 0 \end{array}\right| = -\phi^2g(x).
\]
Also, $ZF = -x\phi  $, so
\[
    \Omega(\nabla F,\nabla ZF) =  \left|\begin{array}{cc}  -x   &  \frac{\partial}{\partial x}(-x\phi  ) \\   1 & \frac{\partial}{\partial y}(-x\phi )\end{array}\right| = \partial_x(x\phi  ),
\]
where $\partial_x$ is like in Remark~\ref{rm:slowvf}.
So
\[
    \mathcal{G} = -\phi^2\partial_x(x\phi  ) g.
\]
Let us now compute the invariant $\mathcal{A}$:
\begin{align*}
    ZF &= -x\phi\\ Z^2F &= -\phi\left(\phi + x\frac{\partial\phi}{\partial x}\right)  \\ Z^3F &=  -\phi^2 \left(2\frac{\partial\phi}{\partial x} + x\frac{\partial^2\phi}{\partial x^2}\right)  -  \phi\frac{\partial\phi}{\partial x}\left(\phi + x\frac{\partial\phi}{\partial x}\right).
\end{align*}
\begin{remark}
    We use $\frac{\partial}{\partial x}$ here, not $\partial_x$.  This subtle difference is actually the reason why the invariant $\mathcal{A}$ is restricted to a point: $\frac{\partial}{\partial x}$ is only intrinsic up to $O(x^2)$.
\end{remark}
At the origin: $ZF(0)=0$, $Z^2F(0) = -\phi^2(0)=-1$, $Z^3F(0)=-3 \frac{\partial \phi}{\partial x}(0)$.  We conclude
\[
    \mathcal{A} = -3\frac{\partial \phi}{\partial x}(0).
\]
(Recall that we have taken $\phi(0)=1$.)

Let us now turn our attention to $\mathcal{G}$. To that  end, observe that
\[
    V = \frac{1}{\partial_x (-x\phi)}\partial_x,
\]
(Indeed: $V(F)=0$ and $V(ZF)=V(-x\phi)=1$, and these were the two governing conditions for $V$.)
Notice now that $\partial_x( x\phi) = \phi + O(x)$, so
\[
    \mathcal{G} = -\phi^2\partial_x(x\phi  ) g\implies \mathcal{G}(0) = -g(0) = -g_0.
\]
The remaining coefficients $g_1$ and $g_2$ will be determined up to $O(g_0)$, so we can and will assume from now on until the end of this proof that $g_0=0$.
To compute $g'(0)$, we derive $\mathcal{G}$:
\[
    V(\mathcal{G}) = \partial_x (\phi^2g) + \phi^2g \frac{\partial^2_x(x \phi )}{\partial_x(x\phi)}
\]
Since $g(0)=0$, we have
\[
    V(\mathcal{G})(0) = \phi^2(0,0,0)g'(0) = g'(0).
\]
Let us finally derive one more time, but disregarding the $O(g)$ terms since we do not keep track of them.  In that respect, $V = -\partial_x$ since $\partial_x(x\phi )|_0 = \phi(0)=1$:
\begin{align*}
    V^2(\mathcal{G})(0) &= -\left.\partial_x^2 (\phi^2g) -  \phi^2\partial_xg \frac{\partial^2_x(x \phi )}{\partial_x(x\phi)}\right|_0   \\
            &= -\left.\partial_x(\partial_x\phi^2 g)- \partial_x(\phi^2\partial_x g) - g'(0) \frac{\partial^2_x(x \phi )}{\partial_x(x\phi)}\right|_0  \\
            &= -\left.2\partial_x\phi^2g'(0) - \phi^2g''(0) - g'(0)\frac{\partial_x(x\partial_x\phi + \phi)}{1}\right|_0 \\
            &= -4\phi(0)\frac{\partial\phi}{\partial x}(0)g'(0) - g''(0) -  2g'(0)\frac{\partial\phi}{\partial x}(0)  \\
            &= -6\frac{\partial\phi}{\partial x}(0)g'(0) - g''(0)  \\
            &= 2\mathcal{A}g'(0) - g''(0)  .
\end{align*}
So the theorem follows since $V^2(\mathcal{G})(0) = 2\mathcal{A}g_1 - 2g_2 + O(g(0))$.
\end{proof}

\subsection{Proof of Theorem~\ref{thm:criticality}}

We apply Proposition~\ref{prop:intrinsic} so that we can work with the system
    \[
    \begin{vf}
        \dot{x} &=& \phi(x,y,\epsilon,\lambda)(y-x^2/2)\\
        \dot{y} &=& \epsilon\phi(x,y,\epsilon,\lambda)(g_0(\lambda) + g_1(\lambda) x   + g_2(\lambda) x^2
     \\&&\qquad\qquad\qquad + O(x^3) + O(y-x^2/2) + O(\epsilon)  ))
    \end{vf}
    \]
to prove the claim.  Theorem~\ref{thm:intrinsic} justifies that we may choose any coordinate system.  We have
\[
    g_0(\lambda) = -\mathcal{G}_\lambda (p_\lambda),
\]
and
\[
        g_1(\lambda) = V_\lambda(\mathcal{G}_\lambda)(p_\lambda) + O(g_0),
        \quad g_2(\lambda) =  g_1(\lambda)\mathcal{A}(\lambda) -\frac12V_\lambda^2(\mathcal{G}_\lambda)(p_\lambda) + O(g_0).
\]
Since $g_0(\lambda_0)=0$ by assumption, the $O(g_0)$ terms are actually $O(\|\lambda-\lambda_0\|)$.\medskip

Since the map $\lambda\to \frac{\partial g_0}{\partial\lambda}$ is nonsingular, there exists (by the submersion theorem a smooth reparametrization
$\lambda = \lambda(\mu)$ so that
\[
    g_0(\lambda(\mu)) = \mu_1,\qquad \forall \mu=(\mu_1,\dots,\mu_n)\approx (0,\dots,0).
\]
We consider now the \emph{associated} and equivalent slow-fast vector field
    \begin{equation}\label{eq:normalhopf}
    \begin{vf}
        \dot{x} &=& y-x^2/2 \\
        \dot{y} &=& \epsilon (a + g_1(\lambda(\mu)) x   + g_2(\lambda(\mu)) x^2
     \\&&\qquad\qquad\qquad + O(x^3) + O(y-x^2/2) + O(\epsilon)  )).
    \end{vf}
    \end{equation}
We have replaced $\lambda$ by its reparameterized version $\lambda(\mu)$, except for the constant   term in $g$ where we have introduced the so-called breaking parameter $a$.
This has the benefit that it is now an $(a,\mu)$-family of slow-fast vector fields where $a$ is the breaking parameter that does not appear in other places; uniformly in $\mu$ and for $a=0$ the origin is a slow-fast Hopf point now.   Standard theory on canards now shows the presence of a Hopf curve $a=a_H(\mu,\epsilon) = O(\epsilon)$ along which a Hopf bifurcation occurs, growing into a canard explosion along canard curves $a=a_C(\mu,\epsilon,y_0)  = O(\epsilon)$, where $y_0$ is the ``height'' of the canard.\medskip

If we now replace $a$ again by $\mu_1$, we have an implicit equation for the Hopf and canard curves
\[
    \mu_1 = a_H(\mu,\epsilon),\qquad \mu_1 = a_C(\mu,\epsilon),
\]
which one can easily solve by the Implicit Function Theorem.  It shows that the bifurcation diagram in the $(a,\mu,\epsilon)$-space transforms to a bifurcation diagram in the $(\mu,\epsilon)$-space and hence also in the $(\lambda,\epsilon)$-space.\medskip

Concerning the criticality: there is an elementary trick to deduce the criticality of the Hopf bifurcation in (\ref{eq:normalhopf}).  We use formal analysis to determine the canard curve $a=a_C(\mu,\epsilon)$.  To that end, we compute to some extent the slow curve, along $a=\epsilon a_0 + O(\epsilon^2)$ and find
\[
    y = \frac{x^2}{2} + \epsilon(g_1 + g_2 x + O(x^2)) + \epsilon^2\left(\frac{hg_1 - g_1g_2 + a_0}{x} + O(1)\right) + O(\epsilon^2).
\]
Here $h$ is the constant coefficient of the $O(y-x^2/2)$ term in $\dot{y}$, which we have not named before (but that will cancel out anyway).
It is well-known that the Taylor coefficients of the canard curve can be found recursively, by avoiding poles in the expression of the formal slow curve.  In other words we choose
\[
    a_0 = g_1g_2 - hg_1.
\]
Next step is  to see that the singular point has coordinates $(x,y) = \epsilon(-a_0/g_1,0) + O(\epsilon^2)$, and if we compute the trace of the jacobian matrix at that point we find
\[
    T = \frac{a_0}{g_1 } + h = g_2.
\]
When $T>0$, the Hopf point is unstable at the moment of the canard explosion, implying that the limit cycle surround the Hopf point is stable.  In other words, $g_2>0$ corresponds to a supercritical Hopf bifurcation.  Since
\[
    g_2|_{\mu=0} = g_2|_{\lambda=\lambda_0} = V_{\lambda_0}(\mathcal{G}_{\lambda_0})(p)\mathcal{A}_{\lambda_0} - \frac12V_{\lambda_0}^2(\mathcal{G}_{\lambda_0})(p) = -\sigma,
\]
we have proved Theorem~\ref{thm:criticality}.

\subsection{Proof of Theorem~\ref{thm:bt}}

Like before, using Proposition~\ref{prop:intrinsic} and Theorem~\ref{thm:intrinsic}, combined with the fact that in studying bifurcations it is allowed to rescale time, we will work with
\[    \begin{vf}
        \dot{x} &=& y-x^2/2\\
        \dot{y} &=& \epsilon(g_0 + g_1x + g_2 x^2 + O(x^3) + O(y-x^2/2) + O(\epsilon)  ))
    \end{vf}
\]
Using this normal form, and under the conditions of Theorem~\ref{thm:bt}, we can directly apply the main (unnumbered) theorem of \cite{sfbt} to conclude the results.

\section{Examples}\label{sec:examples}

\subsection{Van der Pol}

It goes without saying that the computations for the Van der Pol system
\[
    \begin{vf}
        \dot{x} &=& y-\frac12x^2-\frac13x^3\\
        \dot{y}&=&\epsilon(\lambda-x)
    \end{vf}
\]
are elementary: $F=y-\frac12x^2-\frac13x^3$, $Z=\frac{\partial}{\partial x}$ and $Q=(\lambda-x)\frac{\partial}{\partial y}$.  We compute $ZF = -x-x^2$, so
\[
    \Omega(Q,Z) =x-\lambda,\qquad \Omega(\nabla F,\nabla ZF) = (1+2x),\qquad \mathcal{G} = (1+2x)(x-\lambda).
\]
The vector field $V=\frac{-1}{2x+1}\frac{\partial}{\partial x} - \frac{x^2+x}{2x+1}\frac{\partial}{\partial y}$ is such that $V(F)=0$, $V(ZF)=1$, as requested in the presented theory.  We see that $\mathcal{G}(0)=-\lambda$,
\[
    V(\mathcal{G})|_{\lambda=0} = -\frac{4x+1}{2x+1}\implies V(\mathcal{G}(0) < 0.
\]
It means that the origin is indeed a slow-fast Hopf point.  For the criticality, we check
\[
    V^2(\mathcal{G})|_{\lambda=0} = V(-1-2x + O(x^2)) = V(-2x) + O(x) = -2 + O(x).
\]
On the other hand, $ZZF = -1-2x$, $ZZZF=-2$, so $\mathcal{A}=-2$.  We infer that $\sigma  <0$, which indicates a supercritical Hopf bifurcation, as is well-known (see \cite{memoir} for example).

\subsection{Slow-fast Hopf bifurcation in a non-standard form}

Consider
\[
    \begin{vf}
        \dot{x} &=& y(\delta-y) \\
        \dot{y} &=& (-x+\alpha y)(\delta- y) - \epsilon(\beta-\gamma x).
    \end{vf}
\]
The model is a so-called a two-stroke oscillator and is discussed in detail in section 6.1.4 of \cite{WECH}.  The parameters $\alpha,\beta,\delta,\gamma$ are all strictly positive.  We do the computations:

\begin{enumerate}
\item   
    Identification of $(F,Z,Q)$: $F=\delta-y$, $Z=y\frac{\partial}{\partial x} + (-x+\alpha y)\frac{\partial}{\partial y}$ and $Q = -(\beta-\gamma x)\frac{\partial}{\partial y}$.  Assumption A1 is clearly satisfied.  Note that Assumption A2 stipulates that $Z$ should be nonzero, whereas there  is clearly a singular point at the origin.  However, since our results are local and since the origin does not lie on the critical curve, we can restrict the phase space to an open set of $\R^2$ where Assumption A2 is satisfied.
\item   
    Determining contact points (eg.~solving $F=ZF=0$): since $ZF=x-\alpha y$, there is only one contact point $p=(\alpha\delta,\delta)$.  Observe that $ZZF=y+\alpha (x-\alpha y)$, so $ZZF(p)=\delta\not=0$.  It is therefore a generic contact point satisfying Assumptions A3.
\item
    We compute $ZZZF = \alpha y + (\alpha y-x)(1-\alpha^2)$ which evaluates to $\alpha\delta$ at $p$.  It follows that the first invariant is given by $\mathcal{A} =\frac{\alpha}{\delta}$.
\item
    For the second invariant, $\mathcal{G}$, we compute $\nabla F=(0,-1)$ and $\nabla ZF = (1,-\alpha)$.  So
    \[
        \mathcal{G} = \Omega(Q,Z)\Omega(\nabla F,\nabla ZF) = \left|\begin{array}{cc} 0 & y \\ -(\beta-\gamma x) & -x+\alpha y
        \end{array}\right|.\:\left|\begin{array}{cc} 0 & 1\\ -1 & -\alpha
        \end{array}\right|.
    \]
    We obtain $\mathcal{G} = y(\beta-\gamma x)$.
\item
    We find $\mathcal{G}(p) = \delta(\beta-\alpha\gamma\delta)$, so for $\beta = \beta_0 := \alpha\gamma\delta$ the point $p$ is a singular contact point (as is seen in the analysis in \cite{WECH}).
\item
    In order to determine the nature of this contact point, we examine the derivatives of $\mathcal{G}$.  This is easy since $V=\frac{\partial}{\partial x}$ in this case: $V(F)=0$ and $V(ZF)=1$.  So $V(\mathcal{G}) = -\gamma y$ and $V^2(\mathcal{G}) = 0$.  We see that the conditions for a slow-fast Hopf point are satsified, since $V(\mathcal{G})(p)<0$, and we easily find
    \[
        \sigma = \frac12V^2(\mathcal{G})(p) - V(\mathcal{G})(p)\mathcal{A} = \gamma \alpha.
    \]
    It is strictly positive; we therefore confirm the determination of a subcritical Hopf bifurcation that was obtained in \cite{WECH}.
\end{enumerate}

\subsection{Revisiting criticality conditions in \cite{krupaszmolyan}}

In Section 3.2 of \cite{krupaszmolyan}, the authors study the following system
\[
    \begin{vf}
        \dot{x} &=& -y h_1(x,y,\lambda,\epsilon) + x^2 h_2(x,y,\lambda,\epsilon) + \epsilon h_3(x,y,\lambda,\epsilon)   \\
        \dot{y} &=& \epsilon( xh_4(x,y,\lambda,\epsilon) - \lambda h_5(x,y,\lambda,\epsilon) + yh_6(x,y,\lambda,\epsilon))
    \end{vf}
\]
where
\[
    h_3(x,y,\lambda,\epsilon) = O(x,y,\lambda,\epsilon),
\]
and
\[
    h_j(x,y,\lambda,\epsilon) = 1 + O(x,y,\lambda,\epsilon),\qquad j=  1,2,4,5.
\]
We examine the contact point $p=(0,0)$, and extract an $(F,Z,Q)$-triplet:
\[
    F = -yh_{10} + x^2h_{20},\qquad Z = \frac{\partial}{\partial x}
\]
and
\[
    Q = q_1\frac{\partial}{\partial x} + (xh_{40} - \lambda h_{50} + yh_{60})\frac{\partial}{\partial y}.
\]
In these formulas, $h_{j0}(x,y,\lambda) := h_j(x,y,\lambda,0)$ for $j=1,2,4,5,6$.  The function $q_1=q_1(x,y,\lambda)$ is $O(x,y,\lambda)$.
In the rest of this section we adopt the notation that an extra subscript means derivation w.r.t.~a variable, for example $h_{10x} = \frac{\partial h_{10}}{\partial x}$.
We compute
\[
    ZF =  -yh_{10x} + 2xh_{20} +  x^2h_{20x},\quad
    ZZF =  2h_{20} + 4xh_{20x} + O(x^2) + O(y),
    \]and\[
    ZZZF = 6h_{20x} + O(x) + O(y).
\]
So $ZF(0)=0$, $ZZF(0)=2h_{20}(0)=2$, $ZZZF(0)= 6a_3$, where we adopt the notation from \cite{krupaszmolyan} by putting $a_3 := \frac{\partial h_2}{\partial x}(0)$.
The first invariant is hence given by
\[
    \mathcal{A} = \frac32 a_3.
\]
Next, observe that
\[
    \Omega(Q,Z) = \left|\begin{array}{cc}
          *    & 1 \\
         xh_{40} - \lambda h_{50} + yh_{60} &0
    \end{array}\right| = -xh_{40} +\lambda h_{50} - yh_{60}.
\]
We compute the expression up to $O(x^3,xy,y^2)$.  So $h_{40} = 1+    h_{40x}x + O(x^2) = 1 + a_4x + O(x^2)$, adopting again the notation $a_4 = \frac{\partial h_4}{\partial x}(0)$ from \cite{krupaszmolyan}.  Write also $h_{60} = a_5 + O(x,y)$.  It means
\[
    \Omega(Q,Z) = -x -a_4x^2 + \lambda h_{50} - a_5y + O(x^3,xy,y^2).
\]
We also compute
\[
    \Omega(\nabla F,\nabla ZF) = \left|\begin{array}{cc}
         2xh_{20} + O(x^2,y) & 2h_{20} + 4x h_{20x} + O(x^2,y)
        \\
        -h_{10} + O(x^2,y) & -h_{10x} + 2xh_{20y} + O(x^2,y)
    \end{array}\right|
\]
which evaluates to
\[
    2h_{10}h_{20}-2xh_{10x}h_{20}  +4 xh_{10}h_{20x} + O(x^2,y).
\]
We replace $h_{10}$ by $1 + xh_{10x} + O(x^2)$ and $h_{20}$ by $1+xh_{20x} + O(x^2)$.  Further notations introduced in \cite{krupaszmolyan} were $a_2 := \frac{\partial h_1}{\partial x}(0)$ so
\begin{align*}
\Omega(\nabla F,\nabla ZF) &= 2(1+a_2x)(1+a_3x)-2a_2x(1+a_3x) + 4a_3x + O(x^2) + O(y)\\
    &=
        2 +6a_3x  + O(x^2,y).
\end{align*}
As a consequence we find the second invariant
\begin{align*}
    \mathcal{G} &= (-x -a_4x^2 + \lambda h_{50} - a_5y + O(x^3,xy,y^2))(2 +6 a_3x  + O(x^2,y)) \\
                 &= 2\lambda(h_{50}+O(x,y,\lambda)) + (-x -a_4x^2  - a_5y)(2 +6a_3x) + O(x^3,xy,y^2) \\
                 &= 2\lambda(h_{50}+O(x,y,\lambda)) -2 (x + a_4x^2  + a_5y)  - 6a_3x^2 + O(x^3,xy,y^2).
\end{align*}
Clearly, $\mathcal{G}(0)=2h_{50}\lambda$, so the origin is a slow-fast Hopf point as soon as we prove that $V(\mathcal{G})(0)<0$.  We still need to determine $V$.  We know that it should be tangent to $(\nabla F)^\perp$ (eg.~perpendicular to $\nabla F$), and that $V(ZF)=1$.  We have
\[
    ZF = -yh_{10x} + 2xh_{20} +  x^2h_{20x} = -a_2y + 2x + 3a_3x^2   + O(x^3,xy,y^2),
\]
and
\begin{align*}
    (\nabla F)^\perp &= \nabla\left[-y  + x^2(1+a_3x) + O(x^3,xy,y^2)\right]^\perp\\
            &= -\left(2x + O(x^2,y)\right)\frac{\partial}{\partial y} + \left(-1 - a_2x+ O(x^2,y)\right)\frac{\partial}{\partial x}
\end{align*}
so
\begin{align*}
    \nabla F(ZF) &=  -(2 + 6a_3x) + O(x^2,y).
\end{align*}
It means that
\[
    V = \left(x + O(x^2,y)\right)\frac{\partial}{\partial y} + \left(\frac12 + \frac12( a_2-3a_3)x + O(x^2,y) \right)\frac{\partial}{\partial x}.
\]
We are now ready to compute $V(\mathcal{G})$ (for $\lambda=0$):
\[
     V(\mathcal{G}) =  \textstyle -1 +(  -2a_4 -2a_5 -a_2     -3a_3 )x + O(x^2,y)
\]
So at the origin, this value is negative, which indeed indicates a slow-fast Hopf point.  We finally compute
\[
    V^2(\mathcal{G}) = -\frac12( 2a_5  +  2a_4  + 3a_3   + a_2       ) + O(x,y),
\]
which shows that the criticality is governed by
\[
    \sigma  = \frac12
    V^2(\mathcal{G}) - V(\mathcal{G})\mathcal{A} = -\frac12(2a_5  +  2a_4  - 3a_3   + a_2 ).
\]
This value is proportional to the value $A$ in \cite{krupaszmolyan} so the results agree with their results.

\begin{remark}
    Clearly this application demonstrates that one still needs to do a bit of computing.  The benefit of doing the computations this way over putting the system in normal form is that this procedure can be implemented in a computer program and needs little or no adaptation to the specific application in mind.   A computer program with the triplet $(F,Z,Q)$ as input can easily generate $\mathcal{A}$, $\mathcal{G}$, $\sigma$ as output.  In fact, all computations done in this paper have been cross-checked using such a program.
\end{remark}

\section*{Acknowledgements}
 
 This work was supported by the bilateral research cooperation fund of the Research Foundation Flanders (FWO) and the National Foundation for Science and Technology (NAFOSTED) under Grant No G0E6618N.

\bibliography{singbif}

\begin{thebibliography}{DMDR20}

\bibitem[BR17]{app2}
Richard Bertram and Jonathan~E. Rubin.
\newblock Multi-timescale systems and fast-slow analysis.
\newblock {\em Math. Biosci.}, 287:105--121, 2017.

\bibitem[DMD11]{sfbt}
P.~De~Maesschalck and F.~Dumortier.
\newblock Slow-fast {B}ogdanov-{T}akens bifurcations.
\newblock {\em J. Differential Equations}, 250(2):1000--1025, 2011.

\bibitem[DMDR20]{Mono}
Peter De~Maesschalck, Freddy Dumortier, and Robert Roussarie.
\newblock {\em Canards from birth to transition}.
\newblock preprint, 2020.

\bibitem[DMW15]{wech-pdm}
Peter De~Maesschalck and Martin Wechselberger.
\newblock Neural excitability and singular bifurcations.
\newblock {\em J. Math. Neurosci.}, 5:Art. 16, 32, 2015.

\bibitem[DR96]{memoir}
Freddy Dumortier and Robert Roussarie.
\newblock Canard cycles and center manifolds.
\newblock {\em Mem. Amer. Math. Soc.}, 121(577):x+100, 1996.
\newblock With an appendix by Cheng Zhi Li.

\bibitem[DR09]{birth}
Freddy Dumortier and Robert Roussarie.
\newblock Birth of canard cycles.
\newblock {\em Discrete Contin. Dyn. Syst. Ser. S}, 2(4):723--781, 2009.

\bibitem[KP18]{app}
B.~W. Kooi and J.~C. Poggiale.
\newblock Modelling, singular perturbation and bifurcation analyses of
  bitrophic food chains.
\newblock {\em Math. Biosci.}, 301:93--110, 2018.

\bibitem[KS01]{krupaszmolyan}
M.~Krupa and P.~Szmolyan.
\newblock Relaxation oscillation and canard explosion.
\newblock {\em J. Differential Equations}, 174(2):312--368, 2001.

\bibitem[LX04]{highdim}
Yang Lijun and Zeng Xianwu.
\newblock Stability of singular {H}opf bifurcations.
\newblock {\em J. Differential Equations}, 206(1):30--54, 2004.

\bibitem[Wec20]{WECH}
Martin Wechselberger.
\newblock {\em Geometric Singular Perturbation Theory Beyond the Standard
  Form}.
\newblock Springer, 2020.

\end{thebibliography}

\end{document}